\newtheorem{definition}{Definition}
\newtheorem{remark}{Remark}
\newtheorem{theorem}{Theorem}
\newtheorem{proposition}{Proposition}
\DeclareMathOperator{\conv}{conv}
\DeclareMathOperator{\newt}{Newt}
\DeclareMathOperator{\nvol}{NVol}
\DeclareMathOperator{\init}{init}
\newcommand{\boldzero}{\mathbf{0}}
\newcommand{\boldc}{\mathbf{c}}
\newcommand{\bolde}{\mathbf{e}}
\newcommand{\boldf}{\mathbf{f}}
\newcommand{\boldp}{\mathbf{p}}
\newcommand{\boldx}{\mathbf{x}}
\newcommand{\boldy}{\mathbf{y}}
\newcommand{\boldalpha}{\boldsymbol{\alpha}}
\newcommand{\Q}{\mathbb{Q}}
\newcommand{\R}{\mathbb{R}}
\newcommand{\C}{\mathbb{C}}
\newcommand{\imag}{\mathbf{i}}
\newcommand{\adjp}{\nabla}
\newcommand{\inner}[2]{\left\langle #1 \,,\, #2 \right\rangle}
\newcommand{\term}[1]{\textbf{#1}}
\newcommand{\tech}[1]{\textsf{#1}}
\begin{document}


\title{Directed acyclic decomposition of Kuramoto equations}

\author{Tianran Chen}
\affiliation{Department of Mathematics, 
    Auburn University at Montgomery, Montgomery, Alabama USA\\
    Department of Mathematics, Michigan State University,
    East Lansing, Michigan USA}
\email{ti@nranchen.org}
\homepage{www.tianranchen.org}.


\date{\today}

\begin{abstract}
    The Kuramoto model is one of the most widely studied models for
    describing synchronization behaviors in a network of coupled oscillators,
    and it has found a wide range of applications.
    Finding all possible frequency synchronization configurations in a general
    non-uniform, heterogeneous, and sparse network is important yet challenging 
    due to complicated nonlinear interactions.
    From the view point of homotopy deformation, 
    we develop a general framework for decomposing a Kuramoto network into smaller directed acyclic subnetworks,
    which lays the foundation for a divide-and-conquer approach to
    studying the configurations of frequency synchronization of large Kuramoto networks.
\end{abstract}

\maketitle



\begin{quotation}
    The spontaneous synchronization of a network of oscillators
    is an emergent phenomenon that naturally appears
    in many seemingly independent complex systems including
    mechanical, chemical, biological, and even social systems.
    The Kuramoto model is one of the most widely studied and successful
    mathematical models for analyzing synchronization behaviors.
    While much is known about the macro-scale question of whether or not
    a Kuramoto network can be synchronized, 
    detailed analysis of the possible configurations 
    of the oscillator once it has reached synchronization
    remains difficult for large networks
    partly due to the nonlinear interactions involved.
    In this work, we demonstrate that 
    by dividing the link between two oscillators into two one-way interactions,
    complex networks can indeed be decomposed into much simpler subnetwork.
    This is a crucial step toward fully understanding synchronization
    configurations in large networks.
\end{quotation}

\section{Introduction}\label{sec:intro}
Mathematical modeling and analysis of spontaneous synchronization
have found many important applications in 
physics, 
chemistry,
engineering,
biology,
and medical science.~\cite{dorfler_synchronization_2014}
Originally introduced to describe chemical oscillators,
the Kuramoto model~\cite{Kuramoto1975Self,Kuramoto2012Chemical} 
has become one of the most widely studied models for
describing synchronization behaviors in a wide range of situations.
This paper focuses on the study of frequency synchronization,
which describes a particular type of synchronization behavior where 
oscillators are tuned into the same frequency.
The central objective is to understand the set of \emph{all} such configurations 
on potentially non-uniform and non-homogeneous Kuramoto network.
With the appropriate frame of reference,
this problem is equivalent to the study of the full set of
solutions of the system of nonlinear equations
\begin{equation*}
    \omega_i - \sum_{j \in \mathcal{N}_G(i)} k_{ij} \sin(\theta_i - \theta_j)
    \;=\; 0
    \quad\text{for } i=1,\dots,n.
\end{equation*}

The main contribution of this paper is the development of a general framework for 
decomposing a Kuramoto network into subnetworks that can be studied more easily.
This decomposition is enabled by the key insights gained through
an abstract polytope that encodes the network topology.
    The way in which a network is broken down into subnetworks
    mirrors the process through which how the boundary of that polytope
    is broken down into facets.
    From the view point of dynamics, 
    this decomposition comes from
    the limit behavior of Kuramoto networks
    as the differences of the oscillators' natural frequencies are amplified to infinity.
The framework presented in this paper marks a crucial step toward a 
divide-and-conquer approach to studying large Kuramoto networks.
\vskip -0.5ex

Using a complex algebraic formulation, 
we deform the synchronization equations
under which a Kuramoto network
degenerates into a union of those of simpler subnetworks 
supported by directed acyclic subgraphs.
By extending our domain to complex phase angles, 
we can form a new rational system 
that also captures the synchronization configurations as solutions.
This procedure, detailed in \cref{sec:complex},
allows for the introduction of powerful tools stemming from complex algebraic geometry.
The main ideas are briefly illustrated through a simple example in~\cref{sec:main}.
Then, in \cref{sec:facet}, 
through a construction known as the \emph{adjacency polytope},
we decompose the Kuramoto network into
simpler subnetworks induced by facets of this polytope.
Each ``facet subnetwork'' corresponds to a directed acyclic subnetwork
of the original network.
We also explore the topological properties of these subnetworks.
These ``facet subnetworks'' preserves and reveals many important
properties of the original network.
We demonstrate in \cref{thm:root-count} that the number of
synchronization configurations that the original network has 
is bounded by the total root count 
of the facet subnetworks.
In \cref{thm:homotopy}, we demonstrate that the decomposition of a
network into facet subnetworks can be understood as a 
a smooth deformation of the synchronization equations 
that can degenerate into facet subsystems.
Among the facet subnetworks, 
simplest type is known as a ``primitive subnetwork''
and is analyzed in \cref{sec:primitive}.
Each primitive subnetwork has a unique synchronization configuration,
and hence form the basic building block of the decomposition.
Finally, we illustrate the decomposition scheme via concrete examples
in~\cref{sec:example} 
and conclude with remarks on future directions in~\cref{sec:conclusions}.

\section{Kuramoto model}\label{sec:kuramoto}
The Kuramoto model~\cite{Kuramoto1975Self} is a mathematical model 
used to study behavior of a network of coupled oscillators.
An oscillator is simply an object that can continuously vary between two states.
In isolation, each oscillator has its own natural frequency.
When we consider networks of coupled oscillators, however, 
rich and complex dynamic behaviors emerge. 
The oscillators are coupled with one another by idealized springs
with stiffness characterized by their \emph{coupling strength}.
For a pair of oscillators $(i,j)$,
the real number $k_{ij}$ quantifies the strength of coupling between them.
The topology of a network of $N = n + 1$ oscillators 
is modeled by a graph $G = (V, E)$,
in which nodes $V = \{0,\dots,n\}$ represent the oscillators 
and edges $E$ represent their connections. 
The coupling strengths $K = \{k_{ij}\}$ 
and the natural frequencies $\omega = (\omega_0, \dots, \omega_n)$
capture the quantitative information of the network.
In its simplest form, the Kuramoto model is a differential equation 
that describes the nonlinear interactions in such a network 
given by
\begin{equation}\label{equ:kuramoto-ode}
    \dot{\theta}_i (t) \;=\;
    \omega_i - \sum_{j \in \mathcal{N}_G(i)} k_{ij} \sin(\theta_i(t) - \theta_j(t))
    \;, \text{for } i=0,\dots,n,
\end{equation}
where $\theta_i(t)$ is the phase angle of the oscillator $i$
as a function of time $t$,
and $\mathcal{N}_G(i)$ denotes the set of its neighboring nodes.

The network described by $(G,K,\omega)$ together with the above differential equation
is referred to as a \term{Kuramoto network}.
Central to this paper are the special configurations in which the angular velocity 
of all oscillators become perfectly aligned, 
known as \term{frequency synchronization configurations}. 
That is, $\frac{d\theta_i}{dt} = c$ for $i = 0, \dots , n$ and a constant $c$. 
By adopting a proper frame of reference, we can assume $c = 0$,
and the (frequency) synchronization configurations are simply equilibria of the
Kuramoto model~\eqref{equ:kuramoto-ode} given by
\begin{equation}\label{equ:kuramoto-sin}
    \omega_i - \sum_{j \in \mathcal{N}_G(i)} k_{ij} \sin(\theta_i(t) - \theta_j(t))
    \; = \; 0
    \quad\text{for } i=0,\dots,n.
\end{equation}
For the removal of the inherent degree of freedom induced by uniform translation,
the standard practice is to choose node 0 to be the \emph{reference node} 
and set $\theta_0 = 0$.
Assuming the couplings are symmetric, i.e., $k_{ij} = k_{ji}$,
the $n+1$ equations are then linearly dependent.
This allows for the elimination of one equation
and produces a system of $n$ equations in $n$ unknowns $\theta_1,\dots,\theta_n$. 
This \term{synchronization system} is the main focus of this paper.
The structure of the solution set to this system
has been the subject of intense research since the 1970s.
\cite{Baillieul1982,Kuramoto1975Self}
While earlier studies have focused on macro-scale and statistical analyses
of large (often infinite) networks,\cite{Kuramoto1984Cooperative}
recent research has gradually shifted toward precise analysis,
of small finite networks.\cite{Strogatz2000,AeyelsRogge2004Existence,MirolloStrogatz2005Spectrum,MehtaDaleoDorflerHauenstein2015Algebraic,ChenMarecekMehtaNeimerg2019Three,RoggeAeyels2004Stability,OchabGora2010Synchronization,DelabaysColettaJacquod2016Multistability,DelabaysColettaJacquod2017Multistability,ManikTimmeWitthaut2017Cycle}
The complex algebraic approach\cite{Baillieul1982} started by Baillieul and Byrnes
has been particularly successful in this regard 
and forms the foundation of this work.

The key question we set out to answer is whether or not 
the full set of solutions to this system can be understood
through a study of the simpler subnetworks of the original Kuramoto network.

\section{Complex algebraic formulation}\label{sec:complex}

Even though the original formulation of the synchronization system~\eqref{equ:kuramoto-sin} 
considers only real phase angles, it is useful to expand the domain to 
more general complex phase angles,
as this would allow us to apply powerful tools from complex algebraic geometry.
Consider complex phase angle $z_i = \theta_i -  r_i\imag$ for $i = 0,\dots,n$ 
where $\imag = \sqrt{-1}$.
Using the identity $\sin(z) = \frac{1}{2\imag} (e^{\imag z} - e^{-\imag z})$,
we define the new complex variables
\begin{equation}
    x_i = e^{\imag z_i} = e^{r_i+\imag \theta_i}
    \quad \text{for } i=1,\dots,n
    \quad \text{ and } x_0 = e^{0+0\imag} = 1.
\end{equation}
These variables represent the phases of the oscillator as points on the complex plane.
If $r_i = 0$, then $x_i$ lies on the unit circle as the original formulation requires.
If $r_i \ne 0$, $x_i$ deviates from the unit circle
and no longer represents real solutions of the original equations.
However, such \emph{extraneous solutions} (i.e. non-real solutions) 
can be identified easily.
With these new variables, the transcendental terms in~\eqref{equ:kuramoto-sin} 
can be converted to rational functions
\[
    \sin(z_i - z_j) =
    \frac{ e^{z_i \imag} e^{-z_j \imag} - e^{z_j \imag} e^{-z_i \imag} }{ 2 \imag } =
    \frac{1}{2 \imag} \left( \frac{x_i}{x_j} - \frac{x_j}{x_i} \right),
\]
and the system can be transformed into
the \term{algebraic synchronization system}
\begin{equation}\label{equ:kuramoto-rat}
    \omega_i - \sum_{j \in \mathcal{N}_G(i)} 
    a_{ij}' \left( \frac{x_i}{x_j} - \frac{x_j}{x_i} \right)
    \;=\;0 \quad\text{for } i=1,\dots,n
\end{equation}
in the complex variables $x_1,\dots,x_n$
where $a_{ij}' = \frac{k_{ij}}{2 \imag}$.
In this formulation, 
we also allow for complex coupling strength and natural frequencies.
This algebraic formulation has been used in 
Ref.~\onlinecite{Chen2019Unmixing,ChenDavisMehta2018Counting},
and it is similar to the formulation in Ref.~\onlinecite{Baillieul1982}.
However, the above formulation directly connects to the construction of ``adjacency polytopes.''

Denote the rational functions on the left hand sides of the above system by
$\boldf = (f_1,\dots,f_n)^\top$. 
It is clear that the two systems $\boldf = \boldzero$ and $M \boldf = \boldzero$ 
have the exact same solution set for any nonsingular $n \times n$ matrix $M = [M_{ij}]$.
Therefore, without loss of generality, 
we can consider the equivalent system $M \boldf = \boldzero$, which is of the form
\begin{equation}
    \label{equ:kuramoto-unmixed}
    c_k - \sum_{(i,j) \in \mathcal{E}(G)} 
    a_{ijk} \left( \frac{x_i}{x_j} - \frac{x_j}{x_i} \right)
    \;=\; 0
    \quad\text{for } k = 1,\dots,n 
\end{equation}
with $c_k = \sum_{i=1}^n M_{ki} \omega_i$.
This system is referred to as the \term{unmixed form}
of the algebraic synchronization system
(simply \term{unmixed synchronization system} hereafter).
Replacing a system of equations $\boldf = \boldzero$
by $M \, \boldf = \boldzero$ for a nonsingular square matrix $M$
is a common practice in numerical methods for solving nonlinear systems
(e.g., the \emph{randomization} procedure
in numerical algebraic geometry~\cite{SommeseWampler2005Numerical}).

In this form, every monomial appears in every equation,
and each equation no longer represents the balancing condition on a  single oscillator. 
Instead, each equation becomes a linear combination of all
balancing conditions in the network.
The rest of the paper focuses on this system.

\section{A example}\label{sec:main}

We briefly outline the basic idea behind the 
proposed decomposition scheme with a simple example of a network of 
three coupled oscillators (\Cref{fig:comp3}).
Though this case is simple enough to be solved directly by algebraic manipulations,
it can still illuminate main idea of the proposed framework.
In this case, the unmixed synchronization system~\eqref{equ:kuramoto-unmixed} is
\begin{equation}
    \label{equ:unmixed-comp3}
    \small
    \begin{aligned}
        c_1 &= 
            a_{101} \left(\frac{x_1}{x_0} - \frac{x_0}{x_1} \right) +
            a_{121} \left(\frac{x_1}{x_2} - \frac{x_2}{x_1} \right) +
            a_{201} \left(\frac{x_2}{x_0} - \frac{x_0}{x_2} \right) 
         \\
        c_2 &= 
            a_{102} \left(\frac{x_1}{x_0} - \frac{x_0}{x_1} \right) +
            a_{122} \left(\frac{x_1}{x_2} - \frac{x_2}{x_1} \right) +
            a_{202} \left(\frac{x_2}{x_0} - \frac{x_0}{x_2} \right) .
    \end{aligned}
\end{equation}
Existing root count results\cite{Baillieul1982,ChenDavisMehta2018Counting} 
dictate that there are no more than six complex synchronization configurations 
for this Kuramoto network of three oscillators.
Our goal is to understand these synchronization configurations
by examining simpler subnetworks of this network.

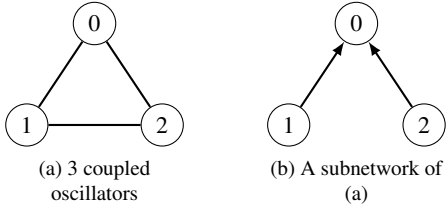
\begin{figure}[h]
    \centering
    \subfloat[3 coupled oscillators]{
        \label{fig:comp3}
        \centering
        \begin{tikzpicture}[scale=0.3]
            \node[draw,circle] (0) at ( 0.0000, 3.00000) {$0$};
            \node[draw,circle] (1) at (-3.0000,-1.50000) {$1$};
            \node[draw,circle] (2) at ( 3.0000,-1.50000) {$2$};
            \path[draw,thick] (0) -- (1);
            \path[draw,thick] (1) -- (2);
            \path[draw,thick] (2) -- (0);
        \end{tikzpicture}
    }\quad\quad\quad%
    \subfloat[A subnetwork of (a)]{
        \label{fig:comp3-sub1}
        \centering
        \begin{tikzpicture}[scale=0.3]
            \node[draw,circle] (0) at ( 0.0000, 3.00000) {$0$};
            \node[draw,circle] (1) at (-3.0000,-1.50000) {$1$};
            \node[draw,circle] (2) at ( 3.0000,-1.50000) {$2$};
            \path[draw,thick,-latex] (1) -- (0);
            \path[draw,thick,-latex] (2) -- (0);
        \end{tikzpicture}
    }
    \caption{A network of three coupled oscillators and one of its subnetwork}
    \label{fig:ex1}
\end{figure}

\Cref{fig:comp3-dac} displays the proposed decomposition scheme of 
this Kuramoto network into six subnetworks, 
supported by directed acyclic graphs 
(i.e., asymmetric couplings)
with each subnetwork corresponding to one of the possible 
complex synchronization configurations.

\begin{figure}[t]
    \centering
    \begin{tikzpicture}[scale=0.3]
        \node[draw,circle] (0) at ( 0.0000, 3.00000) {$0$};
        \node[draw,circle] (1) at (-2.5981,-1.50000) {$1$};
        \node[draw,circle] (2) at ( 2.5981,-1.50000) {$2$};
        \path[draw,thick] (0) -- (1);
        \path[draw,thick] (1) -- (2);
        \path[draw,thick] (2) -- (0);
        \node (d1) at (-7 , 7) { \includegraphics[width=0.1\textwidth]{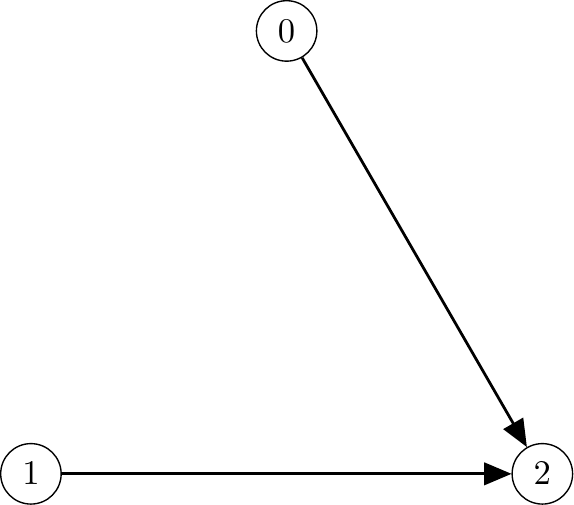} };
        \node (d2) at (-11, 0) { \includegraphics[width=0.1\textwidth]{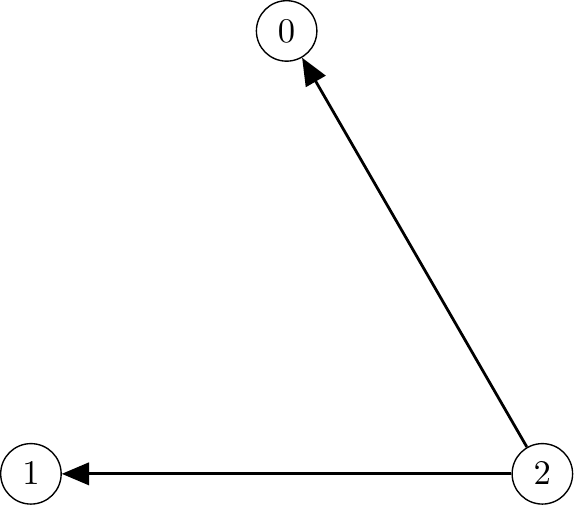} };
        \node (d3) at (-6 ,-6) { \includegraphics[width=0.1\textwidth]{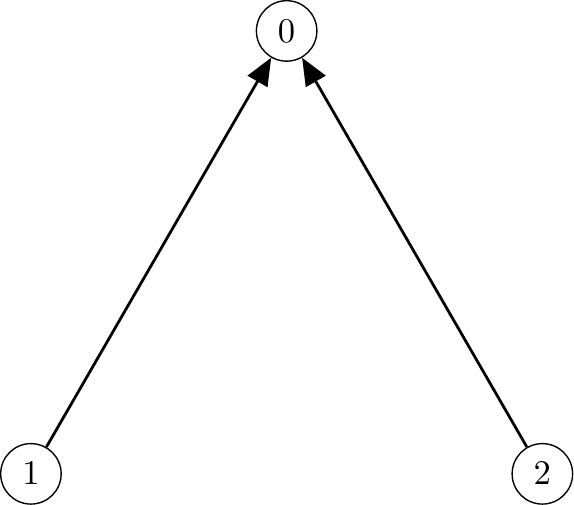} };
        \node (d4) at ( 7 , 7) { \includegraphics[width=0.1\textwidth]{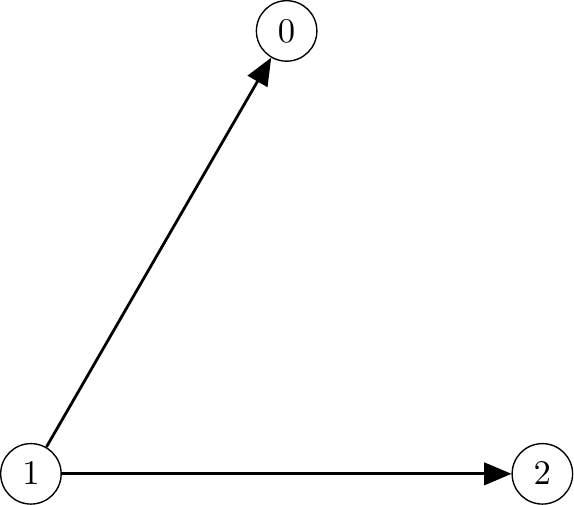} };
        \node (d5) at ( 11, 0) { \includegraphics[width=0.1\textwidth]{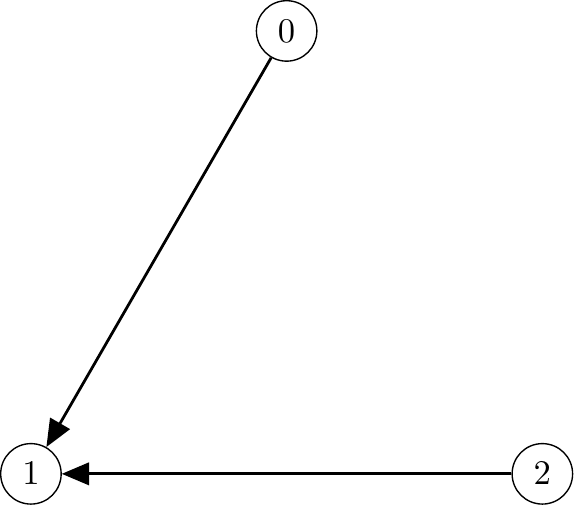} };
        \node (d6) at ( 6 ,-6) { \includegraphics[width=0.1\textwidth]{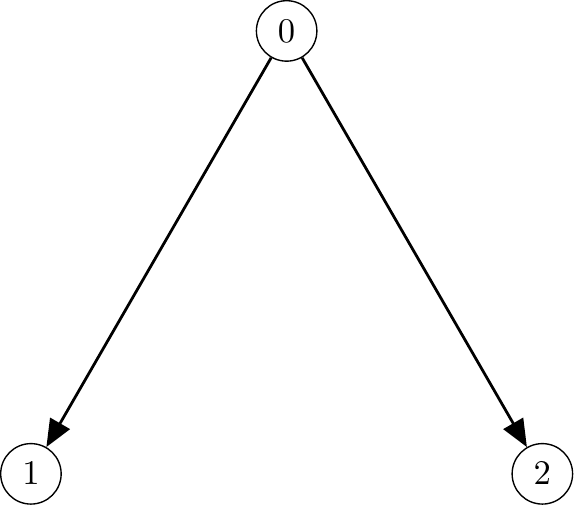} };
        \path[draw,dashed,-latex] (-1.5,2.0) -- (d1);
        \path[draw,dashed,-latex] ( 1.5,2.0) -- (d4);
        \path[draw,dashed,-latex] (-3.0,0.0) -- (d2);
        \path[draw,dashed,-latex] ( 3.0,0.0) -- (d5);
        \path[draw,dashed,-latex] (-5.0,3.5) -- (d1);
        \path[draw,dashed,-latex] (-1.5,-2.5) -- (d3);
        \path[draw,dashed,-latex] ( 1.5,-2.5) -- (d6);
    \end{tikzpicture}
    \caption{
        The decomposition of a network of three oscillators (center)
        into six subnetworks supported by directed acyclic graphs.
    }\label{fig:comp3-dac}
\end{figure}

Each of the subnetworks has its own synchronization system.
For instance, the subnetwork displayed in~\Cref{fig:comp3-sub1}
corresponds to the unmixed synchronization system given by
\begin{equation}\label{equ:subsys3}
    \begin{aligned}
        c_1 &= a_{110} x_1 / x_0 + a_{120} x_2 / x_0 \\
        c_2 &= a_{210} x_1 / x_0 + a_{220} x_2 / x_0
    \end{aligned}
\end{equation}
which is a system of equations involving a subset of terms
in~\eqref{equ:unmixed-comp3}.
Here, each edge is interpreted as a directed edge,
and the directed edge $(i,j)$ corresponds to the term
$x_i / x_j$.
It is easy to verify that for generic choices of coefficients,
the above system has a unique complex solution.
The same analysis can be applied to the rest of the subnetworks 
presented in \Cref{fig:comp3-dac}, 
and we can verify that the synchronization
system induced by each subnetwork has a unique solution 
under the assumption of generic coefficients.
Moreover, these solutions are in one-to-one correspondence with
the six complex frequency synchronization configurations of the
original network in \Cref{fig:comp3} via a homotopy function
$H(x_1,x_2,t) : \C^2 \times \R \to \C^2$ given by
\begin{equation*}
    \small
    \left\{
    \begin{aligned}
        \frac{c_1}{t} - \left[
            a_{101} \left(\frac{x_1}{x_0} - \frac{x_0}{x_1} \right) +
            a_{121} \left(\frac{x_1}{x_2} - \frac{x_2}{x_1} \right) +
            a_{201} \left(\frac{x_2}{x_0} - \frac{x_0}{x_2} \right) 
        \right]
        \\
        \frac{c_2}{t} - \left[
            a_{102} \left(\frac{x_1}{x_0} - \frac{x_0}{x_1} \right) +
            a_{122} \left(\frac{x_1}{x_2} - \frac{x_2}{x_1} \right) +
            a_{202} \left(\frac{x_2}{x_0} - \frac{x_0}{x_2} \right) 
        \right]
    \end{aligned}
    \right.
    .
\end{equation*}
At $t=1$, the equation $H(x_1,x_2,1)=0$ is equivalent to the
original unmixed synchronization system~\eqref{equ:unmixed-comp3}.
Under a mild ``genericity'' assumption, it can be proved that as $t$
varies continuously from 1 to 0 (without reaching 0),
the complex solutions of $H(x_1,x_2,t) = 0$ also move smoothly,
forming smooth paths emanating from the solutions at $t=1$.

As $t$ approaches 0, $H$ becomes undefined, and the six solution paths 
degenerate into the complex frequency synchronizations configurations
of the six subnetworks in \Cref{fig:comp3} in the sense that
the six complex synchronization configurations of the six subnetworks
are the limit points of the following reparametrization of the six paths:
\begin{align*}
    \small
    x_1   &= y_1    &   \frac{x_1}{t} &= y_1  &   \frac{x_1}{t} &= y_1  &   x_1     &= y_1  &   x_1 t &= y_1    &   x_1 t &= y_1 \\
    x_2 t &= y_2    &   \frac{x_2}{t} &= y_2  &   x_2     &= y_2  &   \frac{x_2}{t} &= y_2  &   x_2   &= y_2    &   x_2 t &= y_2.
\end{align*}
That is, as $t \to 0$, along each path, $(y_1,y_2)$ converge to
the unique solution of the synchronization systems of 
one of the subnetworks shown in the decomposition \Cref{fig:comp3-dac}.

As we will demonstrate in~\cref{sec:primitive},
the unique solution of each of the six subsystems (e.g.~\eqref{equ:subsys3})
can be computed easily and explicitly.
This gives rise to a practical algorithm that can be used to
find all (complex) solutions to system ~\eqref{equ:unmixed-comp3}.
After finding the solutions to the six subsystems,
one can trace the smooth paths, 
starting from these solutions and reach the solutions to the original synchronization system~\eqref{equ:unmixed-comp3}.

The deformation induced by $H(x_1,x_2,t)$ works by
moving the parameter $t$ between 1 and 0 with $t=1$ corresponding to the original system.
This has the effect of amplifying the difference in natural frequencies of the oscillators.
As $t \to 0$, the difference in natural frequencies goes to infinity,
at which point synchronization of the entire network is no longer possible
(even considering complex configurations)
and the network breaks down into six simpler subnetworks.

One important feature of this decomposition scheme is that 
it is guaranteed to be able to find \emph{all} complex solutions to~\eqref{equ:unmixed-comp3},
which include \emph{all} real solutions to the original Kuramoto equations
\eqref{equ:kuramoto-sin}.
In the following sections, we develop the general construction.

\section{Adjacency polytope and facet networks}\label{sec:facet}

The decomposition illustrated above is constructed from the geometric
information encoded in a polytope --- 
the ``adjacency polytope.''
\cite{Chen2019Unmixing,ChenDavisMehta2018Counting}
In this section, we briefly review the definition.
A polytope is the convex hull of a finite lists of points in $\R^n$
(a bounded geometric object with finitely many flat faces).
An adjacency polytope is a polytope that can 
capture the topology of a network,
count the number of synchronization configurations, 
and guide the decomposition of the network.

\begin{definition}[Chen\cite{Chen2019Unmixing}]\label{def:ap}
    Given a Kuramoto network $(G,K,\omega)$,
    the \term{adjacency polytope} of this network is the polytope
    \begin{equation}
        \adjp_G = 
        \conv
        \left\{
            \bolde_i - \bolde_j
            \mid (i,j) \in \mathcal{E}(G)
        \right\},
    \end{equation}
    and the \term{adjacency polytope bound} of this network is
    the \emph{normalized volume}
    $\nvol(\adjp_G) = n \, ! \operatorname{vol}_n(\adjp_G)$.
\end{definition}

Here, $\bolde_i$ is the $i$-th standard basis of $\R^n$ and $\bolde_0 = \boldzero$.
The adjacency polytope is the convex hull of entries in the 
(directed) adjacency list of the graph as points in $\R^n$,
and it is a geometric encoding of the topology of the network.
This construction has been studied in the context of 
Kuramoto model and power flow equations.\cite{Chen2019Unmixing,ChenMehta2018Network}
Similar constructions have also appeared in other contexts.\cite{Matsui2011Roots,Higashitani2016Interlacing,DelucchiHoessly2016Fundamental}
The adjacency polytope bound is a simplification and relaxation of the 
Bernshtein-Kushnirenko-Khovanskii bound.\cite{Bernshtein1975Number}
Recently, the author proved that the this bound
remains a sharp upper bound for the total number of 
complex synchronization configurations for certain graphs.\cite{Chen2019Unmixing}
This bound is sharp in the sense that it is attainable when
generic coupling strength and natural frequencies are used.
The explicit formulae for this bound in the case of
cycle and tree graphs have also been established.\cite{ChenDavisMehta2018Counting}

\subsection{Facet subnetworks and subsystems}

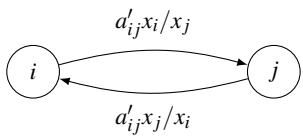
\begin{figure}[h]
    \centering
    \begin{tikzpicture}[scale=0.8,minimum size=0.7cm]
        \node[draw,circle] (i) at (0.0, 0.0) {$i$};
        \node[draw,circle] (j) at (4.0, 0.0) {$j$};
        \path[draw,-latex] (i) edge [bend left=15] node[above] { $a_{ij}' x_i/x_j$ } (j);
        \path[draw,-latex] (j) edge [bend left=15] node[below] { $a_{ij}' x_j/x_i$ } (i);
    \end{tikzpicture}
    \caption{Two directed edges}\label{fig:dir-edge}
\end{figure}

The main goal of this paper is to demonstrate that much more information 
of the original Kuramoto network can be extracted from this polytope.
Of particular importance are the facets of the adjacency polytope,
which are proper faces of maximum dimension.
Each facet gives rise 
to a directed acyclic subnetwork of the original network.
To define such generalized Kuramoto networks,
we first need to reinterpret synchronization equations
\eqref{equ:kuramoto-rat} and~\eqref{equ:kuramoto-unmixed}
from the view point of directed graphs.
In~\eqref{equ:kuramoto-rat}, an (undirected) edge $\{i,j\}$
corresponds to the term $a_{ij}'(x_i/x_j - x_j/x_i)$.
We can split this undirected edge into directed edges 
$(i,j)$ and $(j,i)$ with the corresponding terms 
$a_{ij}'x_i/x_j$ and $a_{ij}'x_j/x_i$ respectively,
as illustrated in~\Cref{fig:dir-edge}.
This allows us to define more general directed Kuramoto networks
induced by special directed graphs:

\begin{definition}\label{def:facet-network}
    For a facet $F$ of $\adjp_G$, we define 
    \begin{align*}
        \mathcal{V}_F &= \{ i \mid \bolde_i - \bolde_j \in F \text{ or } \bolde_j - \bolde_i \in F \text{ for some } j \} \\
        \mathcal{E}_F &= \{ (i,j) \mid \bolde_i - \bolde_j \in F \}.
    \end{align*}
    With these, we define the \term{facet subnetwork} associated with $F$ to be
    the (directed) Kuramoto network $(\mathcal{V}_F,\mathcal{E}_F)$
    and the corresponding \term{facet subsystem} to be the system of equations
    \begin{equation}\label{equ:facet}
        c_k - \sum_{(i,j) \in \mathcal{E}_F} a_{kij} 
        \left(
            \frac{x_i}{x_j}
        \right) = 0
        \quad\text{for each } k \in \mathcal{V}_F.
    \end{equation}
\end{definition}

A facet subsystem is a system of equations involving a subset of terms
in the unmixed synchronization system~\eqref{equ:kuramoto-unmixed}
of the original network.
The selection of the terms depends on the nodes and edges 
that appear in the corresponding facet subnetwork.
In this context, we consider the edge $(i,j)$ to be a directed edge.
That is, $(i,j) \in \mathcal{E}_F$ does not imply $(j,i) \in \mathcal{E}_F$.
As we shall prove, the directed edges $(i,j)$ and $(j,i)$
can never be in the same facet subnetwork.

\begin{remark}
    The splitting of an undirected edge into two directed edges displayed in~\Cref{fig:dir-edge} 
    can also be interpreted as an explicit use of an imaginary interaction term.
    In considering the real frequency synchronization configurations
    defined by~\eqref{equ:kuramoto-sin}, we require $|x_i| = 1$ for all $i$
    since $x_i = e^{r_i + \imag \theta_i}$.
    In this case,
    \[
        a_{ij}' \frac{x_i}{x_j} = 
        \frac{k_{ij}}{2\imag} e^{\imag(\theta_i-\theta_j)} =
        \frac{k_{ij}}{2} [ \sin(\theta_i - \theta_j) - \imag \cos(\theta_i - \theta_j) ]
    \]
    which can be interpreted as the interaction term along the
    directed edge $(i,j)$.
    If both $(i,j)$ and $(j,i)$ are present,
    then the imaginary parts cancel each other out, leaving only the real part
    $k_{ij} \sin(\theta_i - \theta_j)$ as the combined interaction term 
    that matches the original synchronization equations~\eqref{equ:kuramoto-sin}.
    This interpretation of the unidirectional interaction is similar to yet different from
    the interpretations used in the recent studies about Kuramoto models on directed graphs.
    \cite{DelabaysJacquodDorfler2018,RoggeAeyels2004Stability}
\end{remark}

\begin{remark}[Facet computation]
    In this section, we have established an one-to-one correspondence between
    the facets of an adjacency polytope and the facet subnetworks of a Kuramoto network.
    As we demonstrate in the following subsections,
    a great deal of information about all possible
    synchronization configurations can be extracted from the facet networks.
    The proposed analysis therefore hinges on finding all facets of adjacency polytopes.
    Fortunately, enumerating all facets of a convex polytope in high dimensions
    is a well-studied problem in convex geometry,
    and there exist several software packages that can carrying out such calculations
    efficiently.\cite{Fukuda1997,BuelerEngeFukuda2000Exact,Avis2018,polymake}
    Although this problem is known to be of class \#P in general,
    given the symmetries built into the definition of adjacency polytopes,
    it is plausible that facets of this type of polytope can be listed
    much more efficiently.
    For instance, the theory for exploiting the symmetry
    with respect to the origin 
    ($\boldx \in \nabla_G$ if and only if $-\boldx \in \nabla_G$)
    is well understood.
\end{remark}

\subsection{Topological properties of facet subnetworks}

A natural question to ask here is as follows:
what special topological properties do facet subnetworks posses? 
We now establish property restrictions and symmetries.

\begin{theorem}\label{thm:facet-network}
    Given a Kuramoto network, 
    the facet subnetworks (\cref{def:facet-network}) have the following properties:
    \begin{itemize}
        \item Each facet subnetwork is acyclic;
        \item Each facet subnetwork contains all the nodes;
        \item Each facet subnetwork is weakly connected;
        \item Any two paths between a pair of nodes in facet subnetwork are of the same length; 
        \item Each path in a subnetwork contains no more than half of the edges of any cycle in $G$ containing it; and
        \item For each facet subnetwork, its transpose is also a facet subnetwork.
    \end{itemize}
\end{theorem}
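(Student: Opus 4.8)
The plan is to translate the combinatorics of a facet into the arithmetic of a single outer normal vector, and then read off all six properties from that one object. Since every undirected edge $\{i,j\}$ of $G$ contributes both $\bolde_i - \bolde_j$ and $\bolde_j - \bolde_i$ to the generating set of $\adjp_G$, the polytope is symmetric about the origin; assuming $G$ is connected, $\adjp_G$ is full-dimensional and the origin is an interior point. A facet $F$ is then cut out by a supporting hyperplane $\inner{\alpha}{\boldx} = \beta$ with $\inner{\alpha}{\boldx} \le \beta$ on $\adjp_G$; writing $\alpha = (\alpha_1,\dots,\alpha_n)$ and setting $\alpha_0 = 0$ gives $\inner{\alpha}{\bolde_i - \bolde_j} = \alpha_i - \alpha_j$, so that $(i,j) \in \mathcal{E}_F$ precisely when $\alpha_i - \alpha_j = \beta$, while \emph{every} edge of $G$ satisfies $|\alpha_i - \alpha_j| \le \beta$. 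Evaluating the supporting inequality at the interior point $\boldzero$ forces $\beta > 0$. First I would record the rescaled level function $\phi(i) = \alpha_i/\beta$: each directed edge of the facet subnetwork drops $\phi$ by exactly $1$, and each edge of $G$ changes $\phi$ by at most $1$ in absolute value. This dictionary is the engine for everything that follows.

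With $\phi$ in hand, acyclicity is immediate: summing $\phi(i)-\phi(j)=1$ around a hypothetical directed cycle telescopes to $0$ on one side and to the positive cycle length on the other, a contradiction. For the spanning and weak-connectivity claims I would argue by dimension. The vertices of $F$ are exactly the active vectors $\bolde_i - \bolde_j$; their affine hull is $(n-1)$-dimensional, and since $\beta \ne 0$ this affine hull misses the origin, so their \emph{linear} span is all of $\R^n$. The standard rank formula for edge vectors of a graph on $\{0,\dots,n\}$ then yields exactly one (undirected) connected component, which simultaneously gives that the facet subnetwork is weakly connected and that no node is isolated, i.e. $\mathcal{V}_F = \{0,\dots,n\}$. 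The equal-length property is a further consequence of the grading: any directed path from $a$ to $b$ has length $\phi(a) - \phi(b)$, independent of the path chosen.

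The half-cycle bound is the most striking of the six, but it too falls out of $\phi$ once the right observation is in place. Given a cycle $C$ in $G$ of length $m$ and a directed path $P$ of the facet subnetwork whose edges lie among those of $C$, connectivity of $P$ forces it to be a contiguous arc of $C$, say of length $\ell$; its endpoints then satisfy $\phi(a) - \phi(b) = \ell$. The complementary arc of $C$ carries this same $\phi$-change of $+\ell$ across its $m-\ell$ edges, and since each edge of $G$ moves $\phi$ by at most $1$, we get $\ell \le m-\ell$, i.e. $\ell \le m/2$. Finally, the transpose property follows from central symmetry: reflection through the origin sends $F$ to a second facet $-F$ with outer normal $-\alpha$, and $\mathcal{E}_{-F}$ is obtained from $\mathcal{E}_F$ by reversing every directed edge, so the transpose of a facet subnetwork is again a facet subnetwork.

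The step I expect to require the most care is the dimension-to-connectivity translation underlying the middle two bullets. One must verify that the vertices of $F$ are exactly the active edge vectors, and that the passage from the $(n-1)$-dimensional affine hull to a full-dimensional \emph{linear} span is legitimate; this is precisely where $\beta \ne 0$, and hence the interior position of the origin, is indispensable. By contrast, the key \emph{insight} lives in the half-cycle bound, namely that the supporting inequality $|\phi(i)-\phi(j)| \le 1$ holds on all of $G$ and not merely on the active edges. Once the rank computation is secured and that observation is exploited, the level function $\phi$ renders the remaining properties routine.
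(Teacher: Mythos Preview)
Your proposal is correct and shares the paper's core device---the (inner or outer) normal vector of the facet and the resulting level function on the nodes---so the treatments of acyclicity, equal path lengths, the half-cycle bound, and transpose symmetry are essentially identical to the paper's. The one substantive difference is your handling of the two middle bullets. The paper proves ``contains all nodes'' and ``weakly connected'' by two separate ad hoc dimension arguments: for the first it deletes a hypothetical missing node and shows $\dim F$ would drop below $n-1$; for the second it splits a hypothetical disconnection into two vertex blocks and bounds the dimension of each span. You instead invoke the standard rank formula $\dim\operatorname{span}\{\bolde_i-\bolde_j : (i,j)\in\mathcal{E}_F\} = (n{+}1) - c$, where $c$ is the number of connected components of the underlying undirected graph on $\{0,\dots,n\}$, and combine it with the observation that this linear span is all of $\R^n$ (since $\dim F = n-1$ and $\boldzero\notin\operatorname{aff}(F)$). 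This yields $c=1$ in one stroke, giving both weak connectivity and the absence of isolated nodes simultaneously. Your route is shorter and more conceptual; the paper's route is more hands-on but requires no appeal to the incidence-matrix rank fact.
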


\begin{proof}
    Let $\boldalpha = (\alpha_1,\dots,\alpha_n) \in \R^n$ be the 
    inner normal vector of a facet $F$ of $\adjp_G$,
    then there exists an $h \in \R$ such that 
    \begin{align*}
        \langle \boldp, \alpha \rangle &=   h \quad\text{for all } \boldp \in F \text{ while} \\
        \langle \boldp, \alpha \rangle &> h \quad\text{for all } \boldp \in \adjp_G \setminus F.
    \end{align*}
    However, $\boldzero$ is an interior point of $\adjp_G$. 
    Therefore $h < \langle \boldzero, \alpha \rangle = 0$.
    
    (Acyclic) 
    Suppose a facet subnetwork $G_F$ contains a directed cycle 
    $i_1 \to i_2 \to \cdots \to i_m \to i_1$ for some $\{i_1,\dots,i_m\} \subset \{0,\dots,n\}$.
    Let $i_{m+1} = i_1$,  then $\bolde_{i_k} - \bolde_{i_{k+1}} \in F$ 
    for $k = 1,\dots,m$.
    Since $\boldalpha$ is the inner normal of $F$, we have
    \[
        \inner{ \bolde_{i_k} - \bolde_{i_{k+1}} }{ \boldalpha } =
        \alpha_{i_k} - \alpha_{i_{k+1}} = h < 0
        \quad \text{for each } k = 1,\dots,m,
    \]
    and 
    \[
        \alpha_{i_1} < \alpha_{i_2} < \cdots < \alpha_{i_m} < \alpha_{i_1},
    \]
    which is a contradiction.
    
    (Containing all nodes)
    For $n=1$, there are exactly two directed edges
    corresponding to the two facets of $\adjp_G$ 
    which is simply a line segment.
    The statement clearly holds.
    
    For $n>1$, since $\dim \adjp_G = n$, the facet $F$ is $(n-1)$-dimensional.
    Suppose there is a node $i \not\in G_F$.
    Without loss of generality,
    we can assume $i \ne 0$ by reassigning the reference node.
    Let $G'$ be the graph obtained from $G$ by removing the node $i$
    and all its edges.
    Then, $\adjp_{G'} \subset \adjp_G$, and
    the set of vertices of $\adjp_{G'}$ is a subset of the vertices of $\adjp_G$.
    It is straightforward to verify that $F$ remains a face of $\adjp_{G'}$.
    Therefore, $\dim F < \dim \adjp_{G'} \le n-1$,
    which contradicts with the assumption that $\dim F = n-1$.
    
        (Weakly connected)
        Suppose $G_F$ is not weakly connected.
        Let $C_1 \ni 0$ be the nodes in one weakly connected component
        and let $C_2 = \mathcal{V}_F \setminus C_1$.
        By relabeling the nodes, we can assume $C_1 = \{0,1,\dots,m\}$
        and $C_2 = \{m+1,\dots,n\}$.
        Define
        \begin{align*}
            V_1 &= F \; \cap \; \{ \bolde_i - \bolde_j \;\mid\; i \in C_1 \} \\
            V_2 &= F \; \cap \; \{ \bolde_i - \bolde_j \;\mid\; i \in C_2 \}.
        \end{align*}
        Since $C_1$ and $C_2$ belong to different weakly connected components of $G_F$,
        i.e., there is no $\pm(\bolde_i - \bolde_j) \in F$ with $i \in C_1$ and $j \in C_2$,
        we can see that $V_1 \cap V_2 = \varnothing$.
        
        By construction, 
        $V_1 \subset \R^m \times \{ \boldzero \}$ and
        $V_2 \subset \{ \boldzero \} \times \R^{n-m}$.
        Since points in $V_2$ are points of the form $\bolde_i - \bolde_j$,
        we can see that they actually belong to the smaller subspace defined by
        $\langle \;\cdot\;, \mathbf{1} \rangle = 0$.
        Therefore, treating points in $V_1$ and $V_2$ as vectors, we get
        $\dim (\operatorname{span}(V_1) \le m)$ and
        $\dim (\operatorname{span}(V_2) < n - m)$.
        Consequently, the pyramid formed by $F = \conv (V_1 \cup V_2)$ and $\boldzero$
        is of a dimension strictly less than $m + n - m = n$,
        and therefore, the dimension of the facet $F$ itself must be strictly less than $n-1$.
        This contradicts with the fact that $\adjp_G$ is full-dimensional
        and $F$ is one of its facets.
    
    (Equal length) Suppose there are two paths
    $i_0 \to i_2 \to \dots \to i_m$ and $j_0 \to j_2 \to \dots \to j_\ell$  
    with the same starting and end points, that is, $i_0 = j_0$ and $i_m = j_\ell$.
    Then
    \[
        \inner{ \bolde_{i_k} - \bolde_{i_{k+1}} }{ \boldalpha } = h
        \quad \text{for each } k=0,\dots,m-1
    \]
    Summing these $m$ equations, we obtain
    \[
        \inner{ \bolde_{i_0} - \bolde_{i_m} }{ \boldalpha } = mh.
    \]
    Similarly, summing the $\ell$ equations along the path
    $j_0 \to j_2 \to \dots \to j_\ell$ produces 
    \[
        \inner{ \bolde_{j_0} - \bolde_{j_\ell} }{ \boldalpha } = \ell h.
    \]
    However, $j_0 = i_0$ and $i_m = j_\ell$. 
    Thus we must have $mh = \ell h$,
    i.e., $m = \ell$, and these two paths must have the same length.
    
    (Half cycle) 
    Suppose $G$ contains an undirected cycle
    $i_0 \leftrightarrow \dots \leftrightarrow i_m \leftrightarrow \dots \leftrightarrow i_\ell = i_0$.
    Among these edges, the path $i_0 \to i_2 \to \dots \to i_m$ is in $G_F$
    we can show that $m$ is no more than $\ell / 2$.
    By construction, $\bolde_{i_k} - \bolde_{i_{k+1}} \in F$ for each $k=0,\dots,m-1$,
    and hence,
    \[
        \inner{ \bolde_{i_k} - \bolde_{i_{k+1}} }{ \boldalpha } = h
        \quad\text{for each } k = 0,\dots,m-1.
    \]
    Summing these $m$ equations, we obtain
    \[
        \inner{ \bolde_{i_1} - \bolde_{i_m} }{ \boldalpha } = m h.
    \]
    Similarly, consider the path $i_\ell \to i_{\ell-1} \to \cdots \to i_m$, 
    which is in $G$. We must have
    \[
        \inner{ \bolde_{i_{k+1}} - \bolde_{i_{k}} }{ \boldalpha } \ge h
        \quad\text{for each } k = m,\dots,\ell-1.
    \]
    Summing these equations produces
    \[
        \inner{ \bolde_{i_1} - \bolde_{i_m} }{ \boldalpha } \ge (\ell - m) h
    \]
    Recall that $\inner{ \bolde_{i_1} - \bolde_{i_m} }{ \boldalpha } = m h$.
    Thus we must have $mh \ge (\ell - m)h$.
    Since $h<0$, this implies that $m \le \ell - m$, i.e., $2m \le \ell$.
    
    Since $\adjp_G$ is centrally symmetric,
    $-F = \{ -\boldx \mid \boldx \in F \} \subset \adjp_G$,
    and $\dim (-F) = \dim F = n-1$.
    Moreover, for $\bolde_j - \bolde_i \in -F$,
    $\bolde_i - \bolde_j \in F$ by assumption, thus
    \[
        \inner{ \bolde_j - \bolde_i }{ -\boldalpha } =
        \inner{ \bolde_i - \bolde_j }{ \boldalpha } = h
    \]
    with the inner normal $\boldalpha$ of the facet $F$.
    Similarly,
    \[
        \inner{ -\boldp }{ -\boldalpha } =
        \inner{ \boldp }{ \boldalpha } >
        h 
    \]
    for any $-\boldp \in \adjp_G \setminus (-F)$.
    Therefore we can conclude that $-F$ is also a facet of $\adjp_G$.
    We can verify that the facet network $G_{-F}$ is the transpose of $G_F$ by definition.
\end{proof}

\subsection{Facet subnetworks as maximal flow networks}

The facet subnetworks can also be understood from the point of view of flow networks.
Indeed, they form subnetworks consisting of edges on which
certain pseudo-flow is maximized.

Given a vector $\boldalpha = (\alpha_1,\dots,\alpha_n) \in \R^n$,
we can define the pseudo-flow $f : V \times V \to \R$ given by
\begin{equation}
    f_{\boldalpha}(i,j) = \alpha_i - \alpha_j
\end{equation}
with $\alpha_0 = 0$.
Without loss of generality, we further require that the maximum flow be 1,
which can be used as the capacity for all edges.
By combining this setup with the topological properties established in \Cref{thm:facet-network},
we can see that according to this interpretation, 
a facet subnetwork is the subnetwork 
consisting of all vertices $\{0,1,\dots,n\}$, 
but only the edges with no residual capacity,
i.e., edges $(i,j)$ with $f_{\boldalpha}(i,j) = 1$.
Simply put, a facet subnetwork is the largest subnetwork 
on which the pseudo-flow $f_{\boldalpha}$ is maximized.
The problem of listing all facet subnetworks is therefore equivalent to 
the problem of finding all possible assignments of $\boldalpha$ 
that induce such a maximized flow-subnetworks.
Classic flow network algorithms based on linear programming problems
can thus be used here.

\subsection{Algebraic properties of facet subsystems}

    By adopting the complex formulation~\eqref{equ:kuramoto-rat}
    of the synchronization system,
    we paid the price of potentially introducing extraneous solutions
    (synchronization with imaginary phase angles).
    However, this effort allows us to employ more powerful tools
    from complex algebraic geometry.
A fundamental fact in complex algebraic geometry is that,
as far as complex root count is concerned,
the generic behavior of a family of algebraic systems
coincides with the maximal behavior.
That is, if we consider the facet system as a family of
algebraic systems parameterized by the coefficients,
then the generic complex root count coincides with the maximum root count.
This is an important consequence of the Bertini's Theorem,
and it forms the foundation of much of numerical algebraic geometry
(see Ref.~\onlinecite[Theorem 7.1.1]{SommeseWampler2005Numerical}).
Applying this to our context, we obtain the following proposition.

\begin{proposition}\label{pro:facet-root-count}
    For a generic choice of coefficients $\{c_k\}$ and $\{a_{ijk}\}$, 
    the total number of isolated\footnote{
        Isolated solutions here refer to geometrically isolated solutions.
        A solution of a system of equations is said to be geometrically isolated
        if there is a nonempty open set in which it is the only solution.
    }
    complex solutions to 
    a given facet subsystem~\eqref{equ:facet} induced by the facet
    $F$ of an adjacency polytope is a constant.
    Denote this constant by $\mathcal{N}(F)$,
    and then $\mathcal{N}(F)$ is also the maximum number of
    isolated complex solutions this facet subsystem could have.
\end{proposition}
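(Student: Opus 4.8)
The plan is to recognize the facet subsystem~\eqref{equ:facet} as a single member of an algebraic family parametrized by its coefficients, and then to invoke the coefficient-parameter homotopy theorem (the Bertini-type result cited as Ref.~\onlinecite[Theorem 7.1.1]{SommeseWampler2005Numerical}), which is precisely engineered to yield both the constancy and the maximality of the generic solution count.

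First I would fix the facet $F$, which fixes the monomial support of the system: the Laurent monomials $x_i/x_j$ for $(i,j)\in\mathcal{E}_F$ together with the constant terms indexed by $k\in\mathcal{V}_F$. The coefficients $\boldp = (\{c_k\}, \{a_{kij}\})$ then range over a complex affine space $\C^N$, which is irreducible. Because every $x_i/x_j$ is a well-defined regular function only away from the coordinate hyperplanes, the natural ambient space for the unknowns is the algebraic torus $(\C^*)^n$; clearing denominators by multiplying each equation through by a suitable monomial turns~\eqref{equ:facet} into a genuine polynomial system $\boldf(\boldx;\boldp)=\boldzero$ whose coefficients depend affine-linearly --- hence polynomially --- on $\boldp$.

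Next I would form the incidence variety $\{(\boldx,\boldp)\in(\C^*)^n\times\C^N : \boldf(\boldx;\boldp)=\boldzero\}$ and consider its projection onto the parameter space $\C^N$. The coefficient-parameter theorem then applies verbatim: since the parameter space is irreducible, there is a proper algebraic subvariety $\Sigma\subsetneq\C^N$ such that over the Zariski-open dense complement $\C^N\setminus\Sigma$ the number of geometrically isolated solutions in the torus is finite and constant. I would take this constant to be $\mathcal{N}(F)$, and ``generic'' in the statement means precisely that $\boldp\notin\Sigma$.

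Finally, to see that $\mathcal{N}(F)$ is the maximum and not merely the generic value, I would appeal to upper semicontinuity of the number of isolated points in the fibers of this projection. Any isolated solution lying over a special parameter $\boldp_0\in\Sigma$ arises as the limit of isolated solutions transported along continuation paths from generic parameters, and distinct isolated solutions over $\boldp_0$ require distinct paths; hence no fiber can carry more than $\mathcal{N}(F)$ isolated points. The main subtlety --- and the one place where care is genuinely needed --- is the rational nature of the system: one must confirm that restricting to $(\C^*)^n$ and clearing denominators neither discards torus solutions nor introduces spurious ones on the coordinate hyperplanes that would corrupt the count. Once the family is set up on the torus, however, exactly these well-behavedness hypotheses are the ones packaged into the cited theorem, so the remaining work is bookkeeping rather than new analysis.
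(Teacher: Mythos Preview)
Your proposal is correct and matches the paper's own justification: the paper does not give a standalone proof but simply states that the proposition is a direct consequence of Bertini's Theorem (Ref.~\onlinecite[Theorem 7.1.1]{SommeseWampler2005Numerical}) and the parameter/cheater's homotopy theory applied to the family of facet subsystems. Your write-up fleshes out exactly that argument --- irreducible parameter space, constancy over a Zariski-open set, upper semicontinuity for maximality --- so there is nothing to correct or contrast.
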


More specifically, this proposition can be understood as a direct application of
the cheater's homotopy~\cite{LiSauerYorke1989Cheater}
or the parameter homotopy~\cite{MorganSommese1989Coefficient} theory
to the facet subsystems.
Moreover, Kushnirenko's Theorem~\cite{Kushnirenko1976Polyedres} 
provides us with the explicit formula
for the maximum root count in the form of normalized volume.

\begin{proposition}\label{pro:facet-vol}
    For a facet $F$ of an adjacency polytope $\adjp_G$,
    \[
        \mathcal{N}(F) = \nvol (\conv(F \cup \{ \boldzero \})) = n! \operatorname{vol}_n (\conv(F \cup \{ \boldzero \})).
    \]
\end{proposition}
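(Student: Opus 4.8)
The plan is to recognize the facet subsystem~\eqref{equ:facet} as a square \emph{unmixed} system of $n$ Laurent polynomials in the $n$ variables $x_1,\dots,x_n$ (with $x_0 = 1$), and then to invoke Kushnirenko's Theorem, which computes the generic number of isolated solutions of such a system in the torus $(\C^*)^n$ as the normalized volume of its common Newton polytope. Because every synchronization variable $x_i = e^{\imag z_i}$ is nonzero, the solutions we care about lie precisely in $(\C^*)^n$, which is the domain for which Kushnirenko's count is stated, so the generic torus count coincides with the constant $\mathcal{N}(F)$ of \cref{pro:facet-root-count}. The crux of the argument is thus to identify this common Newton polytope with $\conv(F \cup \{\boldzero\})$.

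To carry this out I would pass from monomials to exponent vectors: the term $x_i/x_j$ carries the exponent $\bolde_i - \bolde_j$, while the constant $c_k$ carries the exponent $\boldzero$. Every equation, indexed by $k$, therefore has the same support
\[
    \mathcal{A} = \{\boldzero\} \cup \{\, \bolde_i - \bolde_j \mid (i,j) \in \mathcal{E}_F \,\},
\]
so the system is unmixed with common Newton polytope $P = \conv(\mathcal{A})$. The key geometric step is the identity $\conv\{\, \bolde_i - \bolde_j \mid (i,j) \in \mathcal{E}_F \,\} = F$. This holds because $F$ is a \emph{face} of $\adjp_G$ cut out by a supporting hyperplane $\{\, \boldp \mid \inner{\boldp}{\boldalpha} = h \,\}$; the generators $\bolde_i - \bolde_j$ of $\adjp_G$ lying on $F$ are, by \cref{def:facet-network}, exactly those with $(i,j) \in \mathcal{E}_F$, and a face of a polytope presented as a convex hull equals the convex hull of the generators it contains. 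Hence $P = \conv(F \cup \{\boldzero\})$.

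It then remains to confirm that $P$ is full-dimensional, so that Kushnirenko's formula returns a genuine $n$-dimensional normalized volume. This uses a fact already recorded in the proof of \cref{thm:facet-network}: the facet-defining value satisfies $h < 0 = \inner{\boldzero}{\boldalpha}$, so $\boldzero \notin \operatorname{aff}(F)$, and the $(n-1)$-dimensional facet $F$ together with the apex $\boldzero$ spans an $n$-dimensional pyramid. Applying Kushnirenko's Theorem to the unmixed family then yields that, for generic coefficients, the number of isolated solutions in $(\C^*)^n$ is $n! \operatorname{vol}_n(\conv(F \cup \{\boldzero\})) = \nvol(\conv(F \cup \{\boldzero\}))$, and by \cref{pro:facet-root-count} this generic count is exactly $\mathcal{N}(F)$.

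The main obstacle I anticipate is not a single computation but the careful matching of hypotheses. First, I must check that the genericity assumed in \cref{pro:facet-root-count} places the system inside the dense open set on which Kushnirenko's bound is \emph{attained} rather than merely bounded, and that $\mathcal{N}(F)$ is counted in the torus, so that spurious zeros on coordinate hyperplanes, introduced by clearing denominators, are excluded. Second, the convex-hull identity $\conv\{\bolde_i - \bolde_j\} = F$ must be justified with care: it relies on $F$ being a face of $\adjp_G$, so that no generator lying strictly off $F$, nor any generator interior to $F$, disturbs the identification of the support $\mathcal{A}$ with $\conv(F \cup \{\boldzero\})$.
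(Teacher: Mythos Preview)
Your approach is essentially the paper's: identify the common support of the unmixed facet subsystem as $\{\boldzero\} \cup \{\bolde_i - \bolde_j \mid (i,j) \in \mathcal{E}_F\}$, recognize its convex hull as $\conv(F \cup \{\boldzero\})$, and apply Kushnirenko's Theorem. The one substantive point you flag as an obstacle but do not resolve---whether generic network parameters actually yield \emph{generic} coefficients for the facet subsystem, so that Kushnirenko's bound is attained rather than merely an upper bound---is exactly what the paper's proof settles: by \cref{thm:facet-network} the facet subnetwork is acyclic, so $(i,j)$ and $(j,i)$ are never both in $\mathcal{E}_F$; hence the dependency $a_{kji} = -a_{kij}$ inherited from the term $a_{ijk}(x_i/x_j - x_j/x_i)$ in~\eqref{equ:kuramoto-unmixed} is never active, and the surviving coefficients of the facet subsystem are algebraically independent.
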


\begin{proof}
    Since the directed graph $(\mathcal{V}_F,\mathcal{E}_F)$
    associated with a facet subnetwork is necessarily acyclic
    by~\cref{thm:facet-network},
    edges $(i,j)$ and $(j,i)$ cannot both be in $\mathcal{E}_F$.
    Consequently, the coefficients in the facet subsystem~\eqref{equ:facet} are independent.
    That is, generic choices of the parameters $\{ c_k \}$ and $\{ a_{ijk} \}$
    for the network correspond to independent generic choices of the coefficients
    for the facet subsystem.
    
    In addition, it is important to note that 
    the \emph{support} of the facet subsystem ---
    the collection of all the exponent vectors appeared in the system ---
    is the set of vertices in $F$ together with $\boldzero$.
    According to Kushnirenko's Theorem~\cite{Kushnirenko1976Polyedres},
    $\mathcal{N}(F) = \nvol(\conv(F \cup \{\boldzero\}))$.
\end{proof}

$\mathcal{N}(F)$ can be considered a generalization of the
adjacency polytope bound to facet subnetworks.
Moreover,
the sum of the root count for \emph{all} facet subsystems
gives us a root count for the whole system.
It is critical to note here that the factor of $n!$ in $\mathcal{N}(F)$ above is 
a part of the definition of normalized volume
and it is used to compensate for the shrinking of the Euclidean volume itself
(the volume of a minimum simplex with integral vertices is $1 / n!$ and goes to 0 as $n \to \infty$).
It does not indicate factorial growth in the root count for facet subsystems.
As \cref{sec:primitive} will demonstrate,
the smallest facet subsystem has exactly one solution regardless of the dimension.

\begin{theorem}\label{thm:root-count}
    Given a Kuramoto network, 
    let $\adjp_G$ be the adjacency polytope defined above.
    Then the total number of isolated complex synchronization configurations
    this network has is bounded above by the sum of generic complex
    root counts of the facet subsystems,
    that is, it is bounded by
    \begin{equation*}
        \sum_{F \in \mathcal{F}(\adjp_G)} \mathcal{N}(F)
    \end{equation*}
    where $\mathcal{F}(\adjp_G)$ is the set of facets of the polytope $\adjp_G$.
\end{theorem}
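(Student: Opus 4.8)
\section*{Proof proposal}

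The plan is to reduce the statement to two facts: that the adjacency polytope bound $\nvol(\adjp_G)$ is an upper bound for the configuration count, and that this same bound decomposes \emph{exactly} as $\sum_{F} \mathcal{N}(F)$. For the first fact, I would observe that every complex synchronization configuration is a solution of the unmixed synchronization system~\eqref{equ:kuramoto-unmixed} lying in the torus $(\C^*)^n$, since each coordinate $x_i = e^{r_i + \imag \theta_i}$ is nonzero. The monomials occurring in each equation are the constant term together with $x_i/x_j$ and $x_j/x_i$ over the edges, so the support of every equation equals the vertex set $\{\bolde_i - \bolde_j\}$ of $\adjp_G$ together with $\boldzero$. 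Because $\boldzero$ is an interior point of $\adjp_G$, the Newton polytope of each equation is exactly $\adjp_G$. Since all $n$ equations share this common Newton polytope, Kushnirenko's theorem\cite{Kushnirenko1976Polyedres} bounds the number of isolated solutions in $(\C^*)^n$, and hence the number of isolated complex synchronization configurations, by $n!\operatorname{vol}_n(\adjp_G) = \nvol(\adjp_G)$, recovering the adjacency polytope bound.\cite{Chen2019Unmixing}

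The main step is to establish the exact identity
\[
    \nvol(\adjp_G) \;=\; \sum_{F \in \mathcal{F}(\adjp_G)} \nvol(\conv(F \cup \{\boldzero\})),
\]
after which \cref{pro:facet-vol}, which identifies $\mathcal{N}(F) = \nvol(\conv(F \cup \{\boldzero\}))$, converts the right-hand side into $\sum_F \mathcal{N}(F)$. For this I would use the star decomposition of $\adjp_G$ from its interior point $\boldzero$: every point of $\adjp_G$ lies on a segment from $\boldzero$ to a boundary point, and the boundary is the union of the facets, so
\[
    \adjp_G \;=\; \bigcup_{F \in \mathcal{F}(\adjp_G)} \conv(F \cup \{\boldzero\}).
\]
For distinct facets $F_1 \neq F_2$, the two pyramids $\conv(F_1 \cup \{\boldzero\})$ and $\conv(F_2 \cup \{\boldzero\})$ meet only inside the cone from $\boldzero$ over the common face $F_1 \cap F_2$, whose dimension is at most $n-1$; hence they have pairwise disjoint interiors. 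Euclidean volume is therefore additive over this subdivision, and multiplying the resulting volume identity by $n!$ yields the displayed equation.

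Combining the two steps gives that the number of isolated complex synchronization configurations is at most $\nvol(\adjp_G) = \sum_{F} \mathcal{N}(F)$, as claimed. The main obstacle I anticipate is the geometric decomposition step: I must confirm that $\boldzero$ is genuinely interior --- equivalently, that $\adjp_G$ is full-dimensional, which holds for connected $G$ and is already exploited in \cref{thm:facet-network} --- and then argue carefully that pyramids over distinct facets overlap only in dimension at most $n-1$, so that no $n$-dimensional volume is double-counted. Everything else is furnished by \cref{pro:facet-vol} and the cited root-count theory, so the entire content of the theorem is this volume-additivity argument together with the recognition that the sum over facets reassembles precisely the adjacency polytope bound.
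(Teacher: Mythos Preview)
Your proposal is correct and follows essentially the same route as the paper: the paper's proof also notes that $\boldzero$ is interior to $\adjp_G$, forms the pyramid subdivision $\{\conv(F\cup\{\boldzero\})\}_{F\in\mathcal{F}(\adjp_G)}$, and invokes volume additivity together with \cref{pro:facet-vol}. The only difference is that you spell out the Kushnirenko step yielding the adjacency polytope bound, whereas the paper treats that bound as already established in prior work and leaves it implicit.
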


\begin{proof}
    $\boldzero$ is an interior point of $\adjp_G$.
    Thus the collection of pyramids 
    \[
        \{ 
            \conv(F \cup \{\boldzero\}) \mid
            F \in \mathcal{F}(\adjp_G)
        \}
    \]
    form a subdivision of $\adjp_G$.
    The normalized volume of $\adjp_G$ is the sum of the 
    normalized volume of these pyramid.
    That is,
    \[
        \nvol(\adjp_G) =
        \sum_{F \in \mathcal{F}(\adjp_G)} \nvol( \conv(F \cup \{\boldzero\}) )
        \qedhere
    \]
\end{proof}

When this result is combined with \cref{thm:facet-network},
it is plausible that the adjacency polytope bound $\nvol(\adjp_G)$
can be computed simply by listing all possible facet subnetworks
and computing the adjacency polytope bound for each of such simpler subnetworks.
This process is potentially easier than the $\#P$ problem of volume computation in general.
As we demonstrate in the examples in section~\ref{sec:example},
this can be done for certain classes of networks.

\subsection{Degeneration into facet subsystems}
The true value of the facet subsystems lies in their roles
as destinations for a deformation of the synchronization equations.
That is, we can form a continuous deformation (in the sense of homotopy) 
of the unmixed synchronization equations that can degenerate into simpler facet subsystems
and reduce the problem of finding synchronization configurations
to the problem of solving each individual facet subsystem.
This can be done through a specialized homotopy.

Consider the homotopy $H(\boldx,t) = (H_1,\dots,H_k)$ given by
\begin{equation}
    H_k = 
    \frac{c_k}{t} -
    \sum_{(i,j) \in \mathcal{E}(G)} a_{ijk} 
    \left(
        \frac{x_i}{x_j} -
        \frac{x_j}{x_i}
    \right)
    \;\text{for } k=1,\dots,n.
\end{equation}
The equation $H(\boldx,1) = \boldzero$ is exactly the 
unmixed synchronization system~\eqref{equ:kuramoto-unmixed}.
Since $H(\boldx,t)$ is smooth in $t$ for $t \in (0,1]$,
as $t$ varies between 0 and 1,
the equation $H(\boldx,t) = \boldzero$ represents a continuous deformation
of~\eqref{equ:kuramoto-unmixed}.
With proper choices of coefficients, the solutions of $H(\boldx,t) = \boldzero$ 
also move smoothly, as $t$ varies within $(0,1]$,
forming smooth \emph{solution paths}.
As $t \to 0$, however, this smooth deformation breaks down,
and the equation $H(\boldx,t) = \boldzero$ degenerates into
the facet subsystems.

\begin{theorem}\label{thm:homotopy}
    Given a Kuramoto network $(G,K,\omega)$ with generic
    coupling strengths and natural frequencies,
    the solution set of the system of equations $H(\boldx,t) = \boldzero$ 
    contains a finite number of smooth curves parameterized by $t$
    such that:
    \begin{enumerate}
        \item 
            The set of limit points of these curves as $t \to 1$
            contains all isolated complex synchronization configurations
            of this network; and
        \item
            The set of limit points of these curves as $t \to 0$
            reaches all complex synchronization configurations
            of the facet subsystems at ``toric infinity''
            in the sense that
            for each of these complex solutions $\boldy = (y_1,\dots,y_n)$,
            there exists a curve defined by $H(\boldx,t) = \boldzero$,
            along which $x_i(t) = y_i t^{\alpha_i} + o(t)$ for some $\alpha_i \in \Q$
            and $t$ sufficiently close to 0.
    \end{enumerate}
\end{theorem}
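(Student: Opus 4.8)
The plan is to recognize $H(\boldx,t)=\boldzero$ as a polyhedral (toric) homotopy and to read its degeneration directly off the regular subdivision of $\adjp_G$ induced by the factor $1/t$. First I would record that, as a Laurent system, every $H_k$ has support $\{\boldzero\}\cup\{\bolde_i-\bolde_j : (i,j)\in\mathcal{E}(G)\}$, whose convex hull is exactly $\adjp_G$ (with $\boldzero$ interior). The $1/t$ attached to the constant $c_k$ is precisely a lifting $\nu$ of this support with $\nu(\boldzero)=-1$ and $\nu(\bolde_i-\bolde_j)=0$. The first substantive step is to identify the induced subdivision: reusing the inner-normal computation from the proof of \cref{thm:facet-network}, for each facet $F$ with inner normal $\boldalpha$ and $\inner{\boldp}{\boldalpha}=h<0$ on $F$, the hyperplane with normal $(\boldalpha,-h)$ in $\R^{n+1}$ supports the lifted points from below exactly along $F\times\{0\}\cup\{(\boldzero,-1)\}$. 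Hence the lower hull projects to the pyramidal subdivision $\{\conv(F\cup\{\boldzero\})\}_{F\in\mathcal{F}(\adjp_G)}$ already used in \cref{thm:root-count}, and the cell attached to $F$ carries exactly the facet subsystem~\eqref{equ:facet}.

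Second, I would set up the rescaling dictated by each cell. For the facet $F$ put $\beta_i=-\alpha_i/h$ and substitute $x_i=y_i t^{\beta_i}$. A direct check shows $\beta_i-\beta_j=-1$ precisely when $(i,j)\in\mathcal{E}_F$ and $\beta_i-\beta_j>-1$ otherwise, so multiplying through by $t$ and letting $t\to0$ sends $tH_k$ to $c_k-\sum_{(i,j)\in\mathcal{E}_F}a_{ijk}(y_i/y_j)$, i.e.\ the facet subsystem; every remaining monomial (including each reverse term $x_j/x_i$) carries a strictly positive power of $t$ and drops out. This is item (2)'s toric-infinity statement, with its rational exponents given by $\beta_i=-\alpha_i/h$ (the inner normal $\boldalpha$ of $F$ rescaled by $-1/h$).

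Third, I would invoke the polyhedral-homotopy and Puiseux-series machinery (the parameter and cheater homotopy theory underlying \cref{pro:facet-root-count}, together with Bernstein's theorem) to produce the curves. For generic $\{c_k\},\{a_{ijk}\}$ the equation $H(\boldx,t)=\boldzero$ has, for each $t\in(0,1]$, exactly $\nvol(\adjp_G)$ isolated solutions in $(\C^*)^n$, which assemble into finitely many smooth curves admitting convergent Puiseux expansions $x_i(t)=y_i t^{\beta_i}+\cdots$ near $t=0$. The leading data $(\boldsymbol{\beta},\boldy)$ of such an expansion is forced by the tropical/initial-form analysis to be a cell of the above subdivision together with a solution of its facet subsystem. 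Volume additivity, $\nvol(\adjp_G)=\sum_{F}\nvol(\conv(F\cup\{\boldzero\}))=\sum_F\mathcal{N}(F)$ from \cref{thm:root-count}, matches the number of curves with the total number of facet-subsystem solutions, so the $t\to0$ endpoints reach every solution of every facet subsystem, giving item (2). At $t=1$ the system is the original unmixed synchronization system~\eqref{equ:kuramoto-unmixed}; since the number of curves equals the Bernstein bound, which in turn bounds the number of isolated solutions there, completeness of the homotopy forces every isolated synchronization configuration to be a $t\to1$ limit point, giving item (1).

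The main obstacle I anticipate is that the lifting $\nu$ is deliberately \emph{non-generic}: its cells are the pyramids $\conv(F\cup\{\boldzero\})$, which are not simplices, so the cell systems are the full facet subsystems rather than readily solved binomials, and one must appeal to the version of polyhedral homotopy valid for coarse regular subdivisions. Relatedly, the coefficients $a_{ijk}$ are not globally independent (the terms $x_i/x_j$ and $x_j/x_i$ of one undirected edge are coupled, and each edge is shared among several facets), so I must certify that genericity of the network parameters descends to genericity of \emph{each} cell subsystem. This is exactly where \cref{thm:facet-network} is essential: acyclicity of $G_F$ guarantees $(i,j)$ and $(j,i)$ never coexist in $\mathcal{E}_F$, so within a single facet the surviving coefficients are independent (as already observed in the proof of \cref{pro:facet-vol}), each facet subsystem is generic and zero-dimensional with $\mathcal{N}(F)$ solutions, and no curve stalls at toric infinity for an interior value of $t$. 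Controlling this uniformly---ensuring the paths remain in $(\C^*)^n$ for all $t\in(0,1)$ and that the realized Puiseux exponents correspond only to honest cells---is the technical heart of the argument.
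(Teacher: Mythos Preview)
Your proposal is correct and follows essentially the same route as the paper: both treat $H$ as a specialized polyhedral homotopy with lifting $\nu(\boldzero)=-1$, $\nu(\bolde_i-\bolde_j)=0$, identify the resulting subdivision as the pyramid decomposition $\{\conv(F\cup\{\boldzero\})\}_F$, and invoke Newton--Puiseux/polyhedral-homotopy theory to read off the $t\to 0$ limits as solutions of the facet subsystems. The only notable difference is in part~(1): the paper appeals directly to the parameter-homotopy theorem of Morgan--Sommese (observing that $\boldc/t$ stays generic for generic $\boldc$ and all $t\in(0,1]$), whereas you reach the same conclusion by a counting argument matching the number of curves to the Bernstein bound. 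Both are valid; your version is more self-contained but requires you to certify that curves do not merge or diverge for $t\in(0,1]$, which is exactly what the parameter-homotopy citation packages for free. Your explicit discussion of why the coupled coefficients $a_{ijk}$ still restrict to independent generic coefficients on each facet (via acyclicity from \cref{thm:facet-network}) is a welcome elaboration of a point the paper leaves implicit.
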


    In this case, ``generic'' coupling strength and natural frequencies
    can be understood as coefficients with arbitrarily small perturbation.
    Given any set of $K = [k_{ij}]$, $\omega = (\omega_1,\dots,\omega_n)$,
    and a threshold $\epsilon > 0$, there exists a new set of coefficients
    no more than $\epsilon$-distance away from $(K,\omega)$ 
    for which the above statement holds true.
    This can also be interpreted from a probabilistic point of view:
    if $K$ and $\omega$ are selected at random,
    then the above statement holds true with probability one.
    
    ``Toric infinity'' is a term from complex algebraic geometry that is used 
    to describe space outside $(\C \setminus \{0\})^n$ but inside a certain closure.
    In this context, as $t \to 0$, some coordinates in $\boldx = (x_1,\dots,x_n)$
    either goes to 0 or infinity.
    In either case, the point $\boldx$ escapes $(\C \setminus \{0\})^n$,
    known as the ``algebraic torus''.

\begin{proof}
    (1) This statement describes a direct consequence of the parameter homotopy method
    \cite{MorganSommese1989Coefficient}.
    If we consider the unmixed synchronization system~\eqref{equ:kuramoto-unmixed}
    as a family $\boldf(\boldx; \boldc)$ parameterized by 
    constant terms $\boldc = (c_1,\dots,c_n)$,
    then the parameter $\boldc / t$ remains generic
    for generic choices of $\boldc$ and every $t \in (0,1]$.
    According to the theory of parameter homotopy~\cite{MorganSommese1989Coefficient}, 
    the solution set of $H(\boldx,t) = \boldzero$ in $\C^n \times(0,1]$
    consists of smooth paths parameterized by $t$ whose limit points as $t \to 1$
    include all solutions of $H(\boldx,1) \equiv \boldf (\boldx; \boldc) = \boldzero$.
    
    (2) Let $C \subset \C^n \times (0,1]$ be a curve defined by 
    $H(\boldx,t) = \boldzero$. 
    By the previous part, $C$ can be expressed as a path $\boldx(t)$, 
    smoothly and analytically parameterized by $t$
    for $t \ne 0$.
    Although the smoothness breaks down at $t=0$,
    according to the theory of polyhedral homotopy method~\cite{HuberSturmfels1995Polyhedral}
    as well as the Newton-Puiseux Theorem,
    there exists a smooth function $\boldy(t) = (y_1,\dots,y_n)$ 
    and a vector $\boldalpha = (\alpha_1,\dots,\alpha_n) \in \Q^n$
    such that $\boldy(t)$ has a limit point in $\C^n$ as $t \to 0^+$ and
    \[
        \boldx(t) = \boldy(t) \cdot t^{\boldalpha} =
        (y_1(t) \, t^{\alpha_1}, \cdots, y_n(t) \, t^{\alpha_n})
    \]
    for sufficiently small positive $t$ values.
    Moreover, $\hat{\boldalpha} = (\boldalpha,1) \in \Q^{n+1}$ must be 
    an inner normal vector of a facet of the Newton polytope $\newt(H)$ of $H(\boldx,t)$ 
    which is the pyramid formed by $\adjp_G \subset \R^n \subset \R^{n+1}$ 
    together with $(\boldzero,-1) \in \R^{n+1}$, i.e.,
    $\hat{\boldalpha}$ is an inner normal vector of
    \begin{equation*}
        \newt(H) = \conv \left(
            \{ (\boldx,0) \mid \boldx \in \adjp_G \}
            \cup
            \{ (\boldzero,-1) \}
        \right).
    \end{equation*}
    In this case, $\boldalpha$ is an inner normal vector of a facet $F$ of $\adjp_G$.
    Consequently, the initial form $\init_{\alpha} (H)$ is exactly 
    the facet subsystem induced by $F$ as displayed in \eqref{equ:facet},
    and $\lim_{t \to 0^+} \boldy$ exists and is a solution to this system.
\end{proof}

The above theorem reduces the problem of solving a complicated synchronization system
\eqref{equ:kuramoto-unmixed} to the problem of solving simpler facet subsystems:
Once the solutions to each facet subsystem are found,
they become the starting points of the smooth paths defined by $H(\boldx,t) = \boldzero$.
Then, standard numerical ``path tracking'' algorithms
\cite{allgower_survey_1981,ChenLi2015Homotopy,davidenko_new_1953,SommeseWampler2005Numerical}
can be applied to trace these smooth paths 
and reach the desired synchronization configurations
defined by $H(\boldx,1) = \boldf(\boldx) = \boldzero$.
    This homotopy method has the advantage of being \emph{pleasantly parallel}
    in the sense that every configuration can be found independently.

\begin{remark}[Connection to polyhedral homotopy]\label{rmk:polyhedral}
    The homotopy constructed from the above theorem can be viewed as a highly specialized
    polyhedral homotopy method~\cite{HuberSturmfels1995Polyhedral} with 
    a special lifting function that takes the value of $-1$ on constant terms 
    and 0 everywhere else.
    Moreover, instead of degenerating into binomial systems that depend on lifting,
    our homotopy degenerates into facet subsystems that may be found 
    through an examination of the network topology.
\end{remark}

\begin{remark}[Pushing Kuramoto networks to the limit]
    As noted in the end of \cref{sec:example},
    the degeneration of a given network into facet subnetworks
    can be thought of as pushing the network to its breaking point.
    Since the construction of the homotopy $H$ 
    replaces constant terms in~\eqref{equ:kuramoto-unmixed} by $c_k / t$,
    decreasing $t$ from 1 to 0 has the effect of amplifying the differences
    in the natural frequencies.
    We expect this to make synchronization more difficult
    resulting in some real synchronization configurations
    degenerating into non-real complex configurations.
    As $t \to 0$, the differences in the natural frequencies are amplified to infinity,
    at which point the network is not even able to support 
    complex synchronization configurations.
    The network breaks apart, 
    and the facet subnetworks describe the pieces in the sense of limits.
\end{remark}

\section{Primitive facet subnetworks}\label{sec:primitive}

The general facet decomposition scheme outlined above decomposes a
Kuramoto network into a collection of smaller directed acyclic networks
involving the original oscillators --- the facet subnetworks.
Among these subnetworks, the basic building blocks are ``primitive'' subnetworks 
which are, in a sense, the smallest Kuramoto networks that could have a 
frequency synchronization configuration.

\begin{definition}
    In a given Kuramoto network,
    a facet subnetwork is called \textbf{primitive} if 
    the underlying graph $G$ is a weakly connected acyclic graph that contains 
    all the vertices ${0,\dots,n}$ and exactly $n = N-1$ directed edges.
\end{definition}

Since such a primitive subnetwork contains exactly $N$ node and $N-1$ directed edges, 
removing any edge creates disconnected components.
It is thus a minimum facet subnetwork.
Yet, as we demonstrate in this section, 
it is sophisticated enough to have a complex synchronization configuration,
and this configuration \emph{must be unique}.
Equally as important, this unique synchronization configuration can be 
\emph{computed quickly and easily}
using only $O(n)$ complex multiplication and division, 
and no additional memory is needed.
Once solved, the solutions of these primitive facet subsystems 
can be used to bootstrap the homotopy continuation procedure described in the previous section.

\begin{theorem}
    For $N > 1$,
    the facet subsystem~\eqref{equ:facet} supported by a primitive facet subnetwork 
    has a \emph{unique} complex solution for which $x_i \ne 0$ for each 
    $i=1,\dots,n$.
\end{theorem}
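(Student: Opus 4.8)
The plan is to reduce the facet subsystem to a binomial system that the spanning-tree structure of a primitive subnetwork solves uniquely. First I would record the combinatorial shape of a primitive subnetwork: by definition it is weakly connected, acyclic, contains all $n+1$ nodes, and has exactly $n$ directed edges, so its underlying undirected graph is a spanning tree on $\{0,\dots,n\}$ (a connected graph on $N$ vertices with $N-1$ edges). Acyclicity further ensures that each tree edge carries a single orientation --- ruling out both $(i,j)$ and $(j,i)$ --- so there are exactly $n$ directed edges $(i,j) \in \mathcal{E}_F$, matching the $n$ equations of \eqref{equ:facet}, and each contributes a single Laurent monomial $x_i/x_j$ in the variables $x_1,\dots,x_n$ with $x_0 = 1$.

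Next I would treat the $n$ monomials $m_e = x_i/x_j$, one per edge $e = (i,j) \in \mathcal{E}_F$, as new unknowns. The facet subsystem \eqref{equ:facet} is then \emph{linear} in these monomials, reading $A\,\mathbf{m} = \boldc$ with $A = [a_{kij}]$ the $n \times n$ coefficient matrix, $\mathbf{m} = (m_e)_e$, and $\boldc = (c_k)_k$. As observed in the proof of \cref{pro:facet-vol}, the coefficients $a_{kij}$ are independent, so for generic data $A$ is invertible and we may solve $\mathbf{m} = A^{-1}\boldc =: \mathbf{b}$, where every entry $b_e$ is nonzero for generic $\boldc$. It then remains to show that the binomial system $x_i/x_j = b_{(i,j)}$, ranging over the edges, has a unique solution with all $x_i \ne 0$.

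This is where the tree structure does the work, and it also explains the $O(n)$ solvability claimed for primitive subnetworks. Rooting the tree at the reference node $0$ (so $x_0 = 1$) and traversing outward, each non-root node $i$ meets a unique parent edge, oriented either as $(i,p)$, which forces $x_i = b_{(i,p)}\,x_p$, or as $(p,i)$, which forces $x_i = x_p / b_{(p,i)}$. In either case $x_i$ is determined uniquely and is nonzero once the already-determined, nonzero $x_p$ is known, so induction from $x_0 = 1$ assigns a unique nonzero value to every coordinate; these values satisfy all $n$ edge equations, while any solution must obey the same recursion, yielding both existence and uniqueness. Equivalently, the exponent matrix of the $n$ monomials has columns $\bolde_i - \bolde_j$ (the vertices of $F$); it is the reduced incidence matrix of a tree and hence unimodular, so the monomial map $(\C^*)^n \to (\C^*)^n$ is an isomorphism of tori and $\mathbf{b} \mapsto \boldx$ is a well-defined bijection, which also recovers $\mathcal{N}(F) = 1$ from \cref{pro:facet-vol}.

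I expect the main obstacle to be handling the genericity hypothesis cleanly rather than the tree combinatorics. The uniqueness of the torus solution to the binomial system is unconditional, depending only on the underlying graph being a spanning tree, but the passage from the $n$ mixed equations to solvable binomial form requires $A$ invertible and $\mathbf{b} \in (\C^*)^n$. I would therefore state precisely which coefficient conditions are excluded --- the vanishing of $\det A$ or of any coordinate of $A^{-1}\boldc$ --- verify that they cut out a proper Zariski-closed subset, and conclude that outside it the primitive facet subsystem has exactly one solution with every $x_i \ne 0$.
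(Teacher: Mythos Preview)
Your proposal is correct and follows essentially the same approach as the paper: both reduce the facet subsystem to a binomial system by linearly inverting the coefficient matrix (the paper phrases this as Gaussian elimination), and then exploit the spanning-tree structure to solve the binomials by traversing from the reference node $0$, obtaining a unique nonzero value for each $x_i$. Your explicit identification of the excluded Zariski-closed locus and the unimodular/toric interpretation are additions the paper relegates to a separate remark, but the core argument is the same.
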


\begin{proof}
    Since $G$ has exactly $n$ directed edges, the corresponding 
    synchronization system has exactly $n$ nonconstant terms
    in each equation.
    Under the assumption of generic coefficients,
    we can reduce the system to
    \begin{equation}\label{equ:prim-red}
        c_k' = a_{k}' x_{i_k} / x_{j_k} \quad 
        \text{for } k = 1,\dots,n.
    \end{equation}
    via Gaussian elimination with no cancellation of the terms
    with $\{c_k'\}$ and $\{a_k'\}$ representing the resulting coefficients.
        The invertibility of the Gaussian elimination process
        ensures that all $a_k' \ne 0$.
    Having exactly $n = N-1$ edges and $N$ nodes also ensures that
    the underlying undirected graph is an undirected tree.
    Therefore, for each $i = 1,\dots,n$, there is a unique path
    between node $0$ and node $i$ through the nodes 
    $i_0,i_1,\dots,i_m$ with $i_0 = 0$ and $i_m = i$,
    where $m$ is the length of this path.
    \[
        \begin{aligned}
            c_1' &= a_{1}' x_{i_1}^{\pm 1} x_{i_0}^{\mp 1} \\
            c_2' &= a_{2}' x_{i_2}^{\pm 1} x_{i_1}^{\mp 1} \\
                &\vdots \\
            c_m' &= a_{m}' x_{i_m}^{\pm 1} x_{i_{m-1}}^{\mp 1}. \\
        \end{aligned}
    \]
    We should recall that node $0$ is the reference node,
    i.e., $x_{i_0} = x_0 = 1$.
    Solving this system via a forward substitution process, 
    we can determine the value of $x_{i_m} = x_{i} \ne 0$.
    Since the path between node $i$ and node $0$ is unique,
    there is no possibility of inconsistency in this process,
    and hence the value of each $x_i$ is uniquely determined.
    We can therefore conclude that a solution to the facet system exists,
    and it is unique.
\end{proof}

Aside from offering starting points for the homotopy continuation method
that will identify all complex synchronization configurations,
the existence of primitive facet subnetworks also directly contributes to
an estimation of the total number of complex synchronization configurations.
Since each primitive facet subsystem has a unique solution,
by ~\cref{thm:root-count}, the total number of complex synchronization configurations
is greater than or equal to the number of primitive facet subnetworks.

\begin{remark}[Computational complexity]
    From a computational view point,
    the true value of primitive subnetworks lies in
    the ease with which it can be solved.
    The above proof is constructive and suggests a practical algorithm
    for solving the reduced facet subsystem~\eqref{equ:prim-red}
    that only requires $O(n)$ complex multiplication and division, and no additional memory
    once the initial elimination step is done.
    This is of great importance in the homotopy method for solving
    the synchronization system described in the previous section,
    since the solutions to the facet subsystems are the starting points
    of solution paths that can lead to the desired synchronization configurations
    of the original Kuramoto network.
\end{remark}

\begin{remark}[Toric interpretation]
    The above theorem can also be interpreted 
    in the language of toric algebraic geometry as follows.
    The facet subsystem supported by a primitive facet subnetwork
    can be reduced to an equivalent binomial system
    whose associated exponent matrix
    (the matrix whose columns are the exponent vectors for the nonconstant terms)
    is a unimodular matrix.
    Consequently, it defines a (normal) irreducible toric variety of zero dimension
    which must be a single point.
    From this toric geometry view point, 
    we can also see that primitive subnetworks
    are the smallest building blocks in the proposed decomposition scheme.
\end{remark}

\begin{remark}[Real synchronization configurations]
    Recall that algebraic synchronization equations are obtained through 
    a change of variables $x_i = e^{\imag \theta_i}$ for $i=1,\dots,n$,
    so real solutions to the original Kuramoto equations corresponds to
    complex solutions to the algebraic system with
    $|x_i| = 1$ for $i=1,\dots,n$  
    (i.e., the real torus solutions in $(S^1)^n = (\{ z \in \C \mid |z| = 1\})^n$).
    Since a primitive facet subsystem \eqref{equ:prim-red} 
    can be solved using only complex multiplication and division,
    we can ensure $|x_i| = 1$ for $i=1,\dots,n$ 
    by choosing coefficients that have unit absolute values.
    In this case, the unique solution corresponds to a real solution.
    That is, there is always a way to choose the coefficients
    so that the unique solution of a given primitive facet subsystem 
    gives rise to a real solution.
    Under the interpretation of facet subnetworks as
    acyclic directed Kuramoto networks with one-way interaction (\cref{fig:dir-edge}), 
    this real solution can be viewed as a generalized real synchronization configuration
    that represents the limit behavior of the Kuramoto network
    as the differences in the natural frequencies are amplified to infinity.
\end{remark}

As the next section demonstrates,
primitive subnetworks appear naturally in the facet decomposition
of many types of Kuramoto networks, e.g., trees, cycles, and chordal graphs.
In some cases, all facet subnetworks are primitive.

\section{Examples}\label{sec:example}

In this section, we illustrate the facet decomposition scheme using concrete examples.
In all examples, the facets of the adjacency polytopes are computed using the
well test software package \tech{Polymake},\cite{polymake}
which can compute the list of all facets efficiently even for complicated convex polytopes
in very high dimension.

\begin{figure}[ht]
    \centering
    \subfloat[A tree]{
        \begin{tikzpicture}[scale=0.33]
            \node[draw,circle] (0) at ( 0.00, 3.00) {$0$};
            \node[draw,circle] (1) at (-2.00, 0.93) {$1$};
            \node[draw,circle] (2) at ( 0.00,-2.43) {$2$};
            \node[draw,circle] (3) at (-4.00,-2.43) {$3$};
            \node[draw,circle] (4) at ( 2.00, 0.93) {$4$};
            \path[draw,thick]  (1) -- (0);
            \path[draw,thick]  (2) -- (1);
            \path[draw,thick]  (1) -- (3);
            \path[draw,thick]  (4) -- (0);
        \end{tikzpicture}\label{fig:tree5-orig}}%
    \hspace{2ex}
    \subfloat[A cycle]{
        \begin{tikzpicture}[scale=0.33]
        \node[draw,circle] (0) at (1.83697019872e-16,3.0) {$0$};
        \node[draw,circle] (1) at (-2.85316954889,0.927050983125) {$1$};
        \node[draw,circle] (2) at (-1.76335575688,-2.42705098312) {$2$};
        \node[draw,circle] (3) at (1.76335575688,-2.42705098312) {$3$};
        \node[draw,circle] (4) at (2.85316954889,0.927050983125) {$4$};
        \path[draw,thick] (0)  --  (1) ;
        \path[draw,thick] (0)  --  (4) ;
        \path[draw,thick] (2)  --  (1) ;
        \path[draw,thick] (2)  --  (3) ;
        \path[draw,thick] (3)  --  (4) ;
        \end{tikzpicture}\label{fig:cycle5}}%
    \hspace{2ex}
    \subfloat[A chordal network]{
        \begin{tikzpicture}[scale=0.31]
        \node[draw,circle] (0) at ( 0, 3) {$0$};
        \node[draw,circle] (1) at (-3, 0) {$1$};
        \node[draw,circle] (2) at ( 0,-3) {$2$};
        \node[draw,circle] (3) at ( 3, 0) {$3$};
        \path[draw,thick] (0)  --  (1) ;
        \path[draw,thick] (1)  --  (2) ;
        \path[draw,thick] (2)  --  (3) ;
        \path[draw,thick] (3)  --  (0) ;
        \path[draw,thick] (0)  --  (2) ;
        \end{tikzpicture}\label{fig:chord4}}%
    \caption{Three small networks}
    \label{fig:small}
\end{figure}
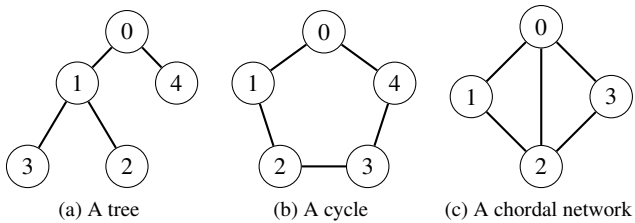
\subsection{A tree network}

We start with the simplest types of networks --- trees.
For a tree graph containing five nodes, displayed in \Cref{fig:tree5-orig},
the corresponding adjacency polytope (\cref{def:ap}) has 16 facets
which produces 16 facet subnetworks, displayed in \Cref{fig:tree5}.
\emph{Every facet subnetwork is primitive in this case}.
Since each primitive facet subnetwork has 
a unique complex synchronization configuration,
we can conclude that the original tree network has at most 16
complex synchronization configurations.
This aligns with the well known result that on a tree network of $N$ oscillators,
there are at most $2^{N-1}$ complex synchronization configurations. 
    Adjacency polytope bounds for trees networks in general
    were analyzed in a closely related work\cite{ChenDavisMehta2018Counting}
    using convex geometry method.
    With the framework proposed in this paper,
    we can see that this bound actually has a topological origin
    --- an acyclic cover of a tree network.
In this case,
the topological constraints given in \cref{thm:facet-network}
actually determine the list of facet subnetworks.
That is, without actually computing the facets of the adjacency polytope,
one can easily enumerate all of the facet subnetworks
by listing the acyclic subnetworks.

Interestingly, this upper bound on the number of complex synchronization configurations 
coincide with the tightest possible upper bound 
on the number of \emph{real} configurations as
it is well known that there could also be as many as $2^{N-1}$ real synchronization configurations.
\cite{Baillieul1982}

\begin{figure}[h]
    \begin{tabular}{ccccc}
        \includegraphics[width=0.11\textwidth]{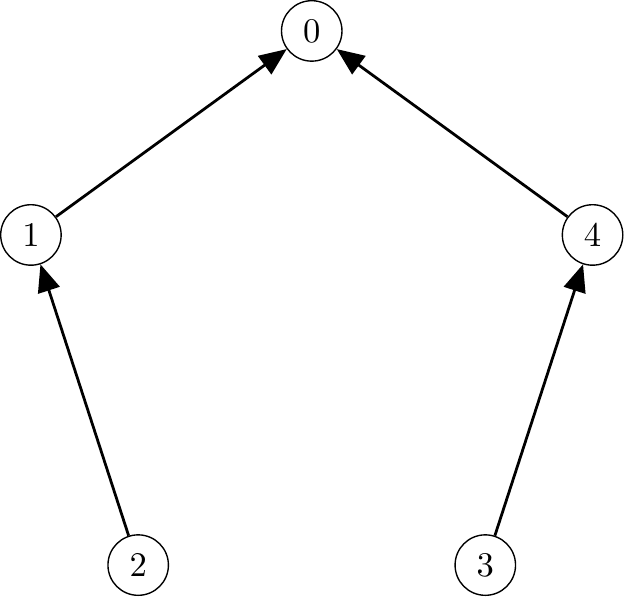} &
        \includegraphics[width=0.11\textwidth]{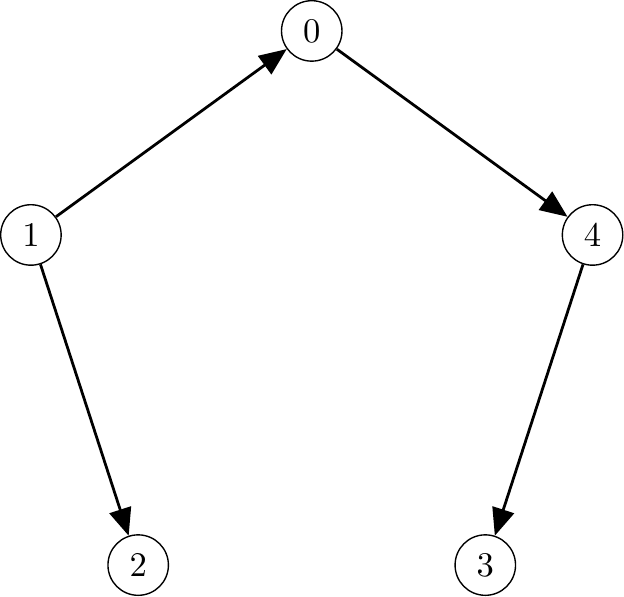} &
        \includegraphics[width=0.11\textwidth]{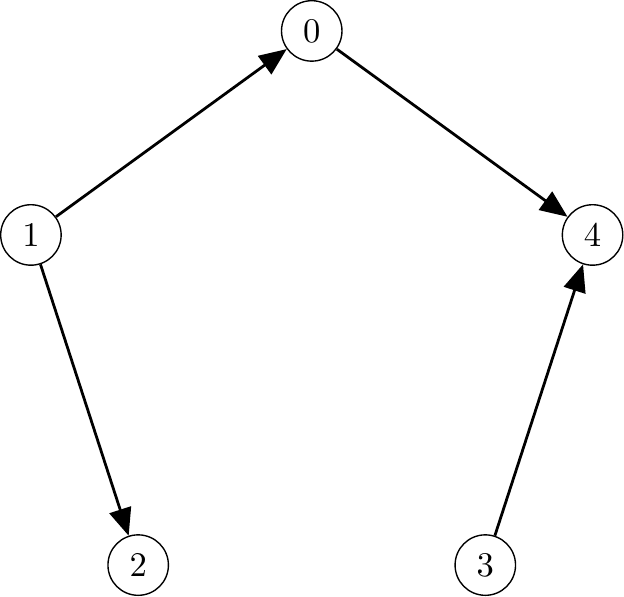} &
        \includegraphics[width=0.11\textwidth]{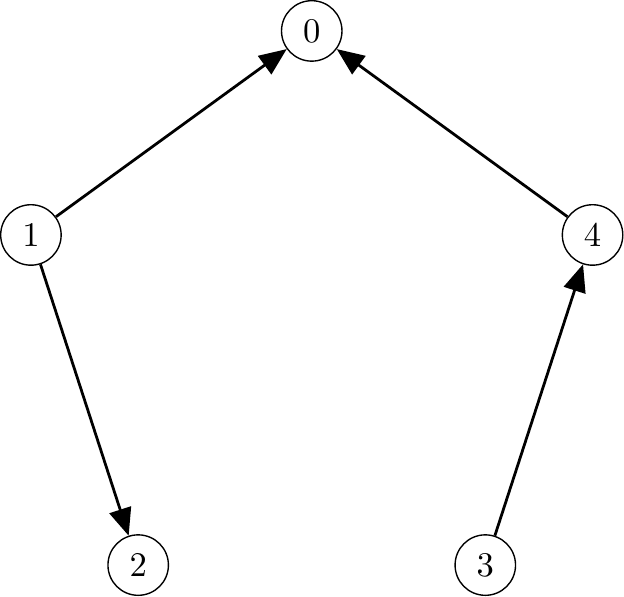} &
        \\[0.2ex]
        \includegraphics[width=0.11\textwidth]{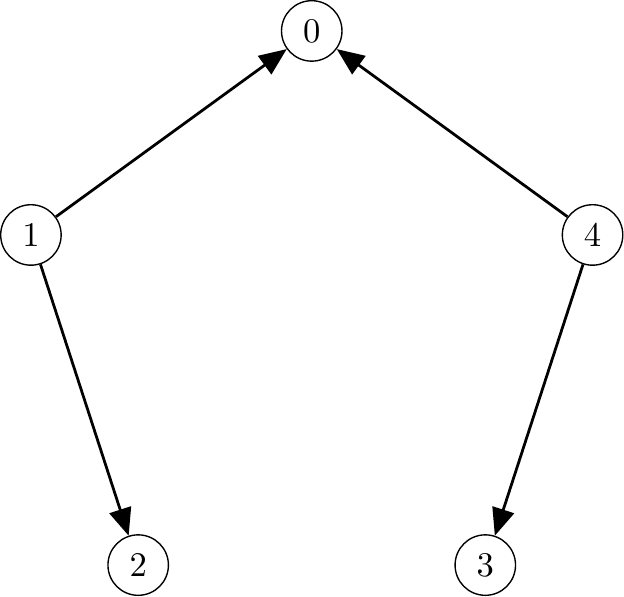} &
        \includegraphics[width=0.11\textwidth]{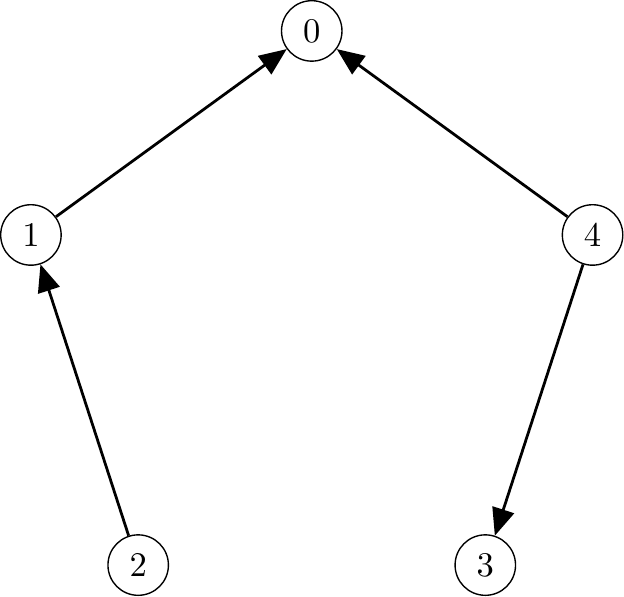} &
        \includegraphics[width=0.11\textwidth]{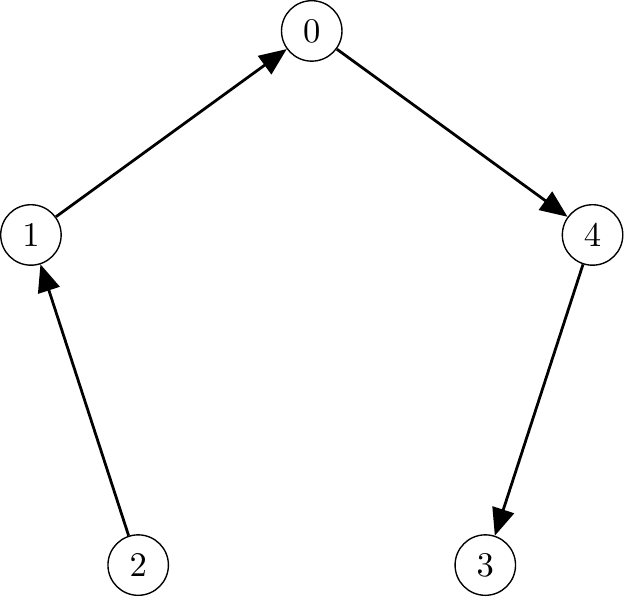} &
        \includegraphics[width=0.11\textwidth]{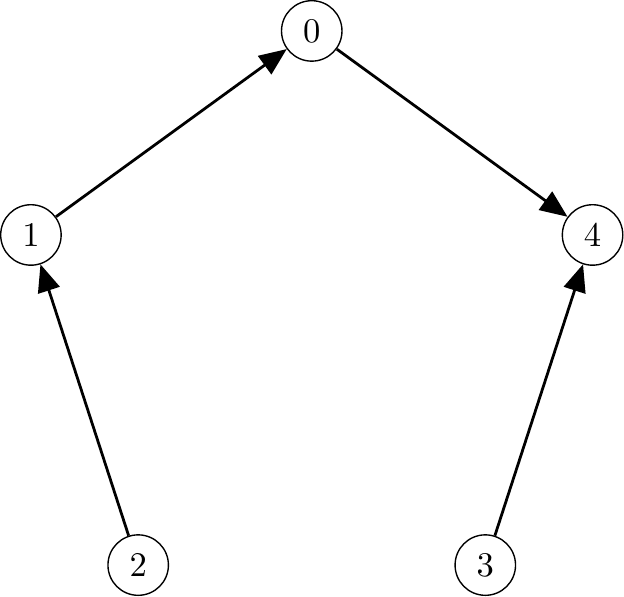} &
        \\[0.2ex]
        \includegraphics[width=0.11\textwidth]{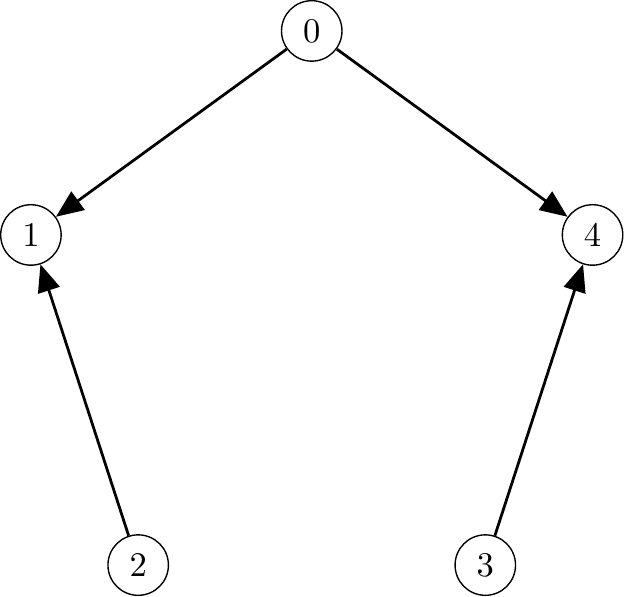} &
        \includegraphics[width=0.11\textwidth]{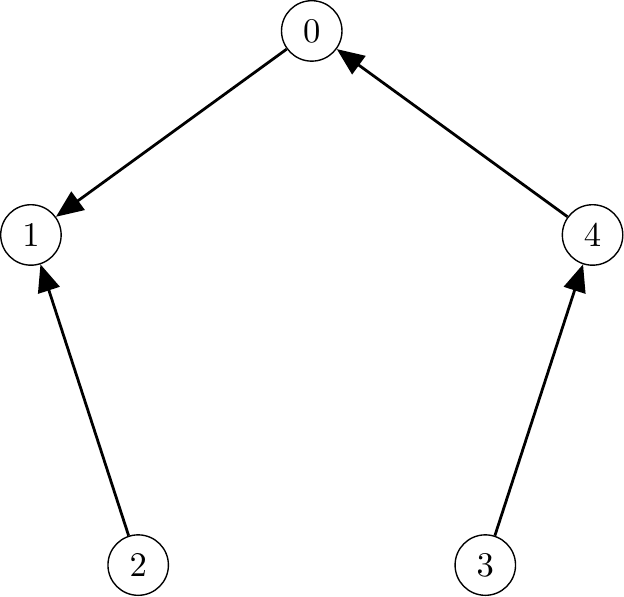} &
        \includegraphics[width=0.11\textwidth]{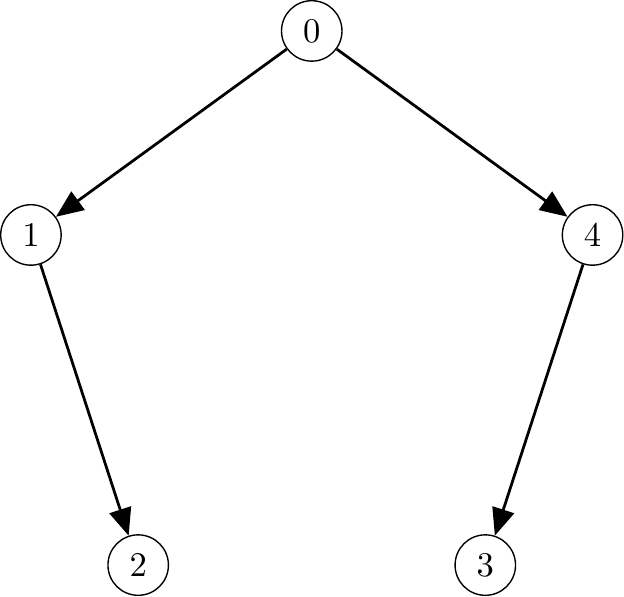} &
        \includegraphics[width=0.11\textwidth]{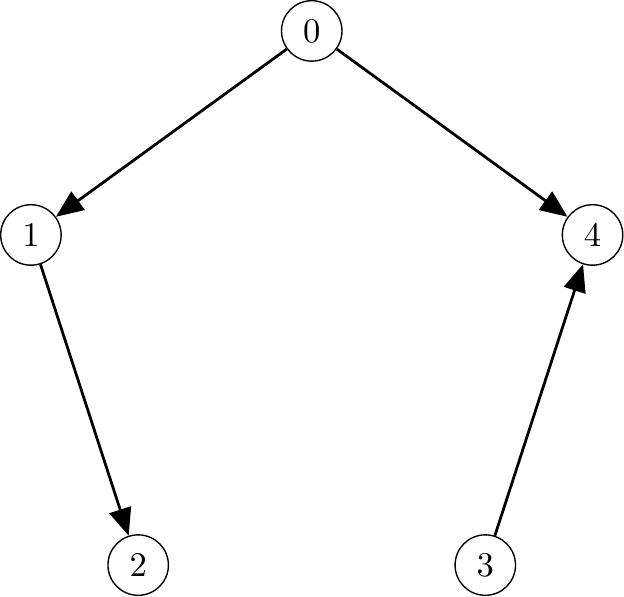} &
        \\[0.2ex]
        \includegraphics[width=0.11\textwidth]{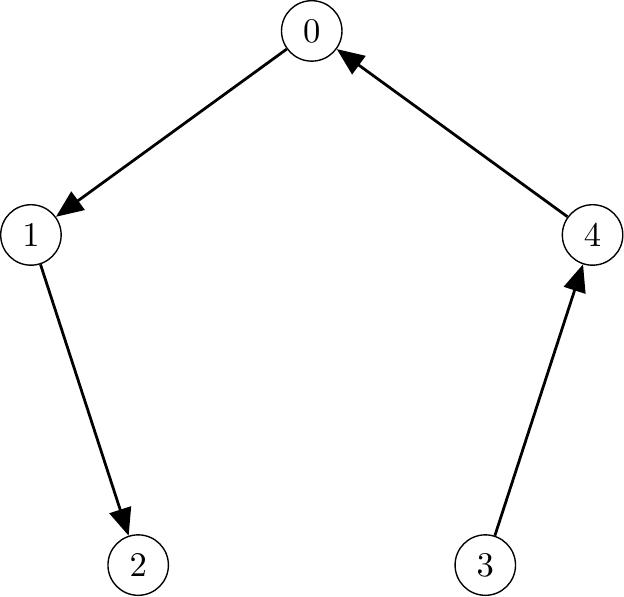} &
        \includegraphics[width=0.11\textwidth]{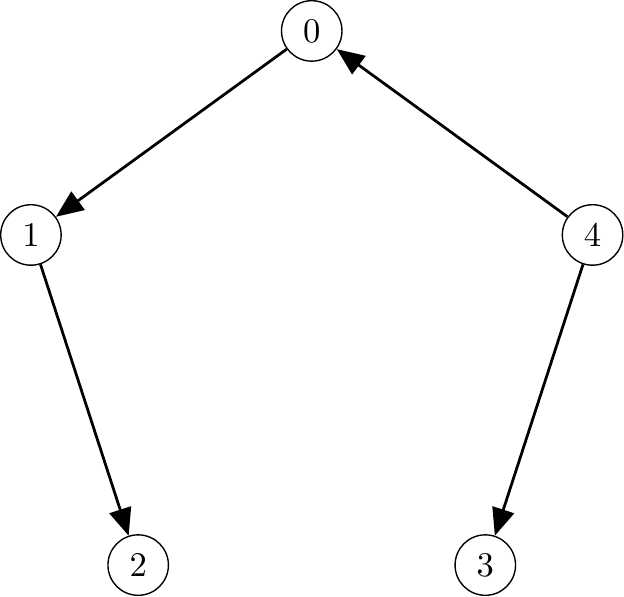} &
        \includegraphics[width=0.11\textwidth]{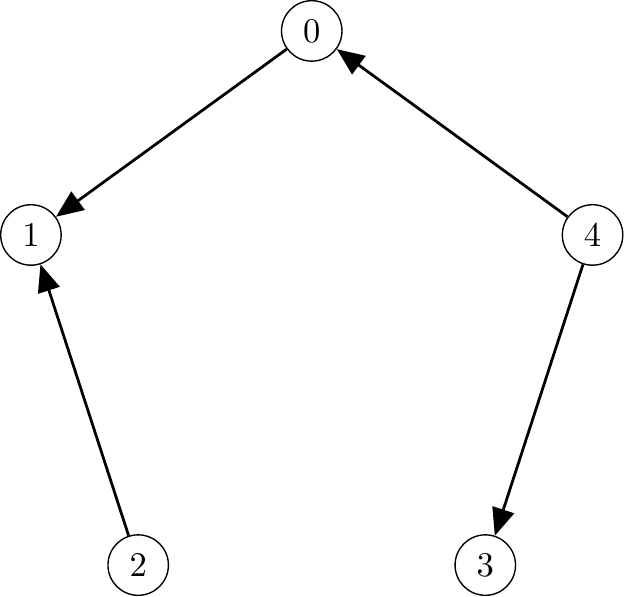} &
        \includegraphics[width=0.11\textwidth]{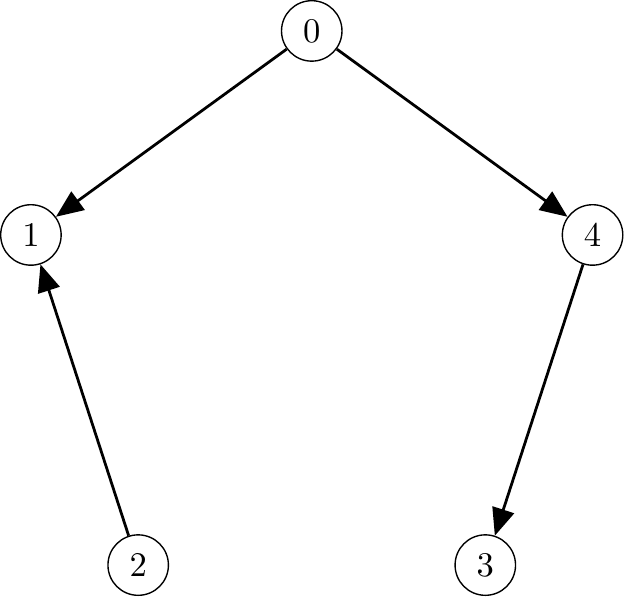} &
    \end{tabular}
    \caption{
        Facet subnetworks of a tree graph of five nodes,
        all of which are primitive
    }
    \label{fig:tree5}
\end{figure}

\subsection{A cycle network}

For a cycle network, there is a similar decomposition, but with more subnetworks.
This is consistent with the observation that cycle networks could have
more possible synchronization configurations than tree networks.
    Unlike the previous case,
    not all acyclic subgraphs give rise to facet subnetworks.
\Cref{fig:cycle5-dac} displays all 30 of the facet subnetworks from
a cycle graph of five nodes (\cref{fig:cycle5}). 
Each subnetwork is primitive and hence has a unique complex synchronization configuration.
    Consequently, a cycle network of five oscillators
    has at most 30 complex synchronization configurations.
    This number also coincide with the maximum number of 
    real synchronization configurations obtained by a recent study,\cite{Zachariah2018Distributions}
    i.e., all complex solutions could be real.
    
    The obvious symmetry shown in \Cref{fig:cycle5-dac} is not accidental.
    In addition to the transpose symmetries stated in \Cref{thm:facet-network}
    (for a facet subnetwork $G_F$, its transpose $G_F^\top$ is also a facet subnetwork),
    we can also see that the set of facet subnetworks 
    inherited the full range of cyclic symmetry from the original network.

\begin{figure}[ht]
    \begin{tabular}{cccccc}
        \includegraphics[width=0.09\textwidth]{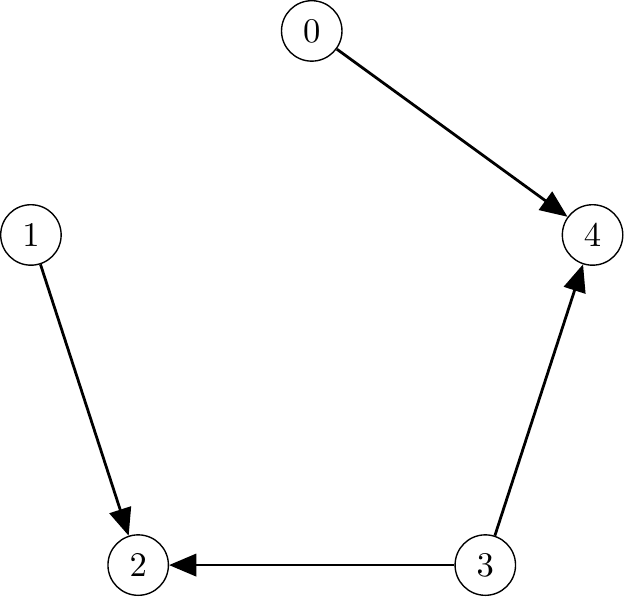} &
        \includegraphics[width=0.09\textwidth]{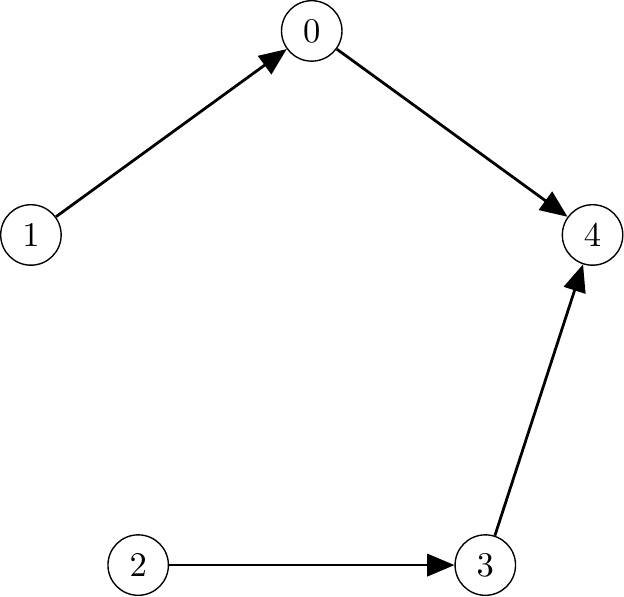} &
        \includegraphics[width=0.09\textwidth]{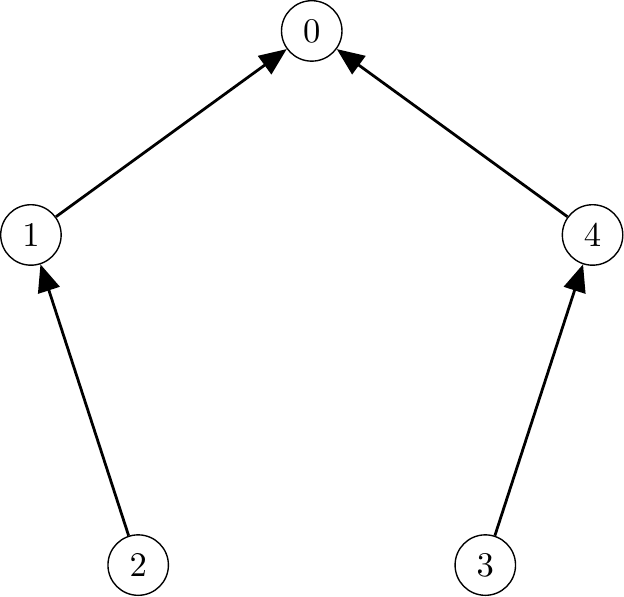} &
        \includegraphics[width=0.09\textwidth]{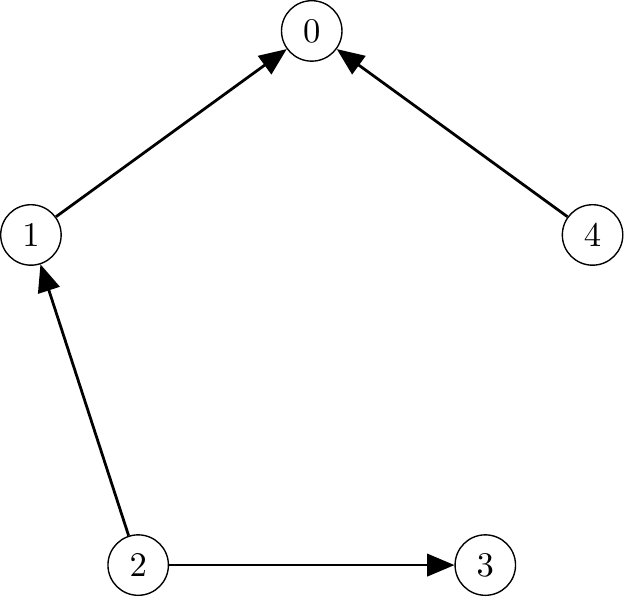} &
        \includegraphics[width=0.09\textwidth]{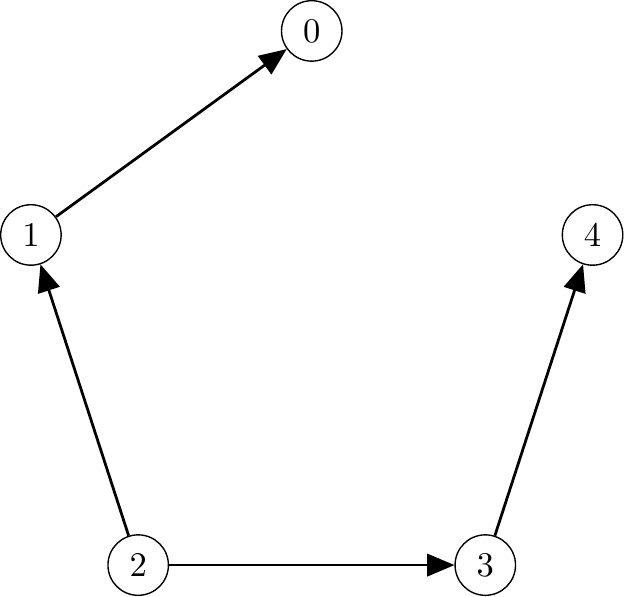} &
        \\[0.3ex]
        \includegraphics[width=0.09\textwidth]{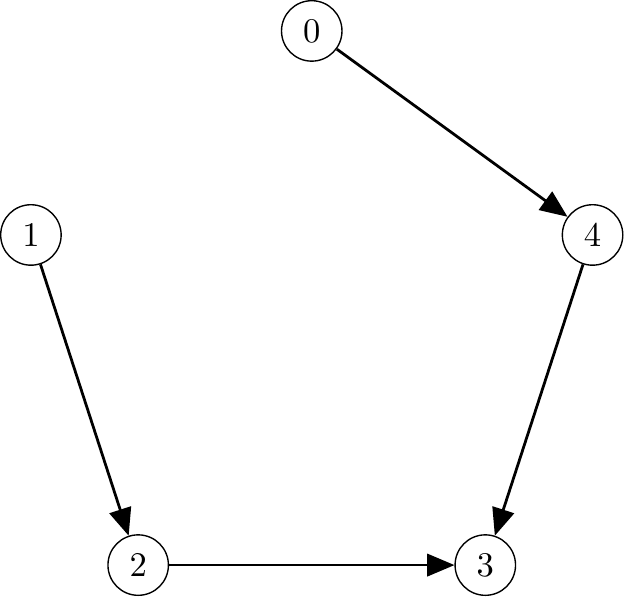} &
        \includegraphics[width=0.09\textwidth]{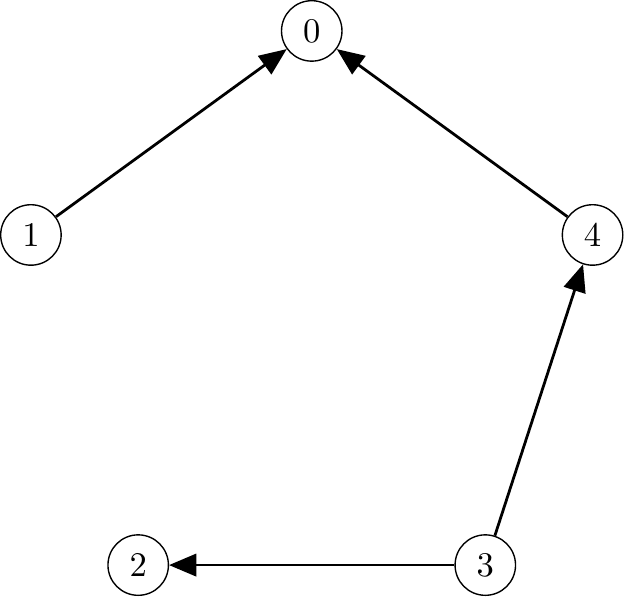} &
        \includegraphics[width=0.09\textwidth]{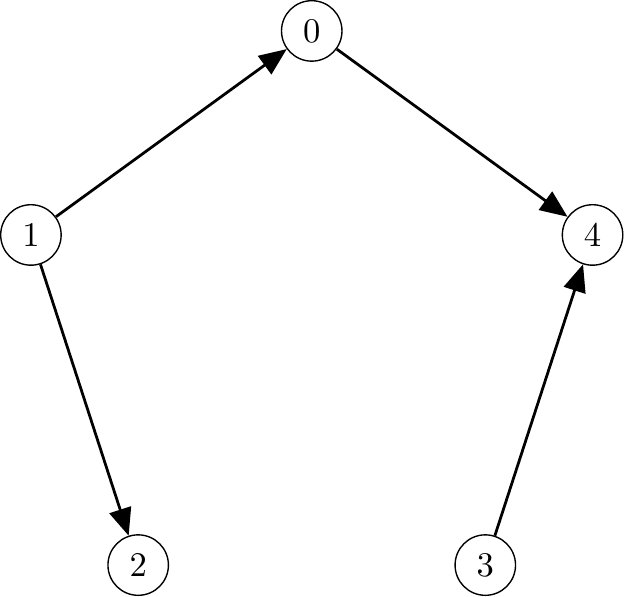} &
        \includegraphics[width=0.09\textwidth]{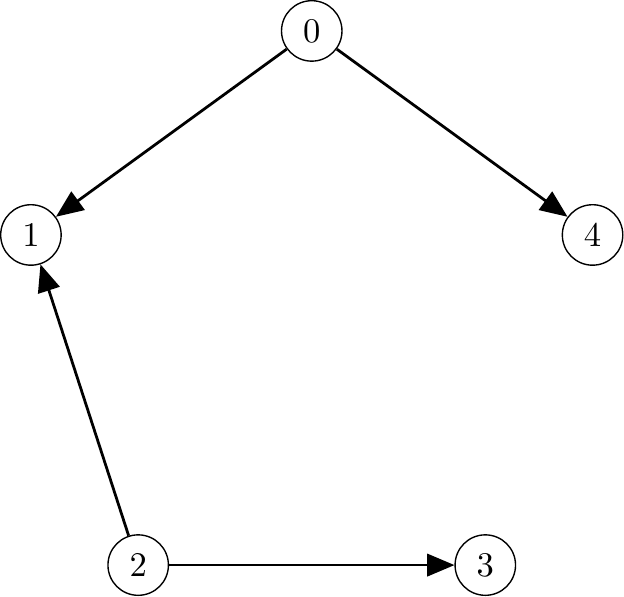} &
        \includegraphics[width=0.09\textwidth]{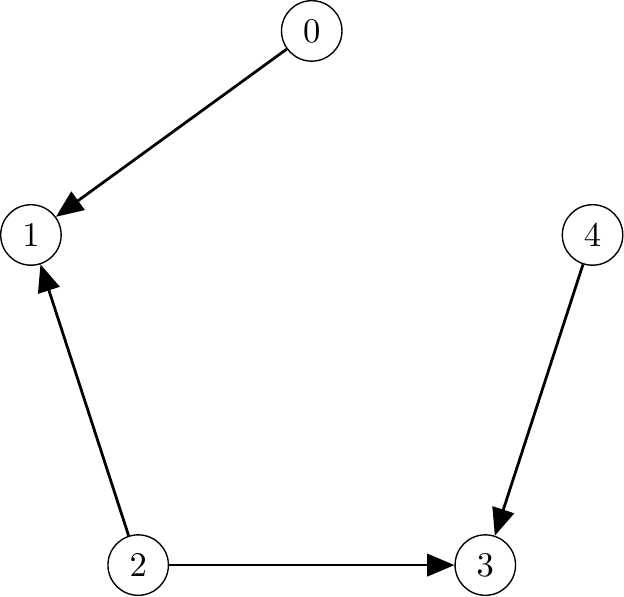} &
        \\[0.3ex]
        \includegraphics[width=0.09\textwidth]{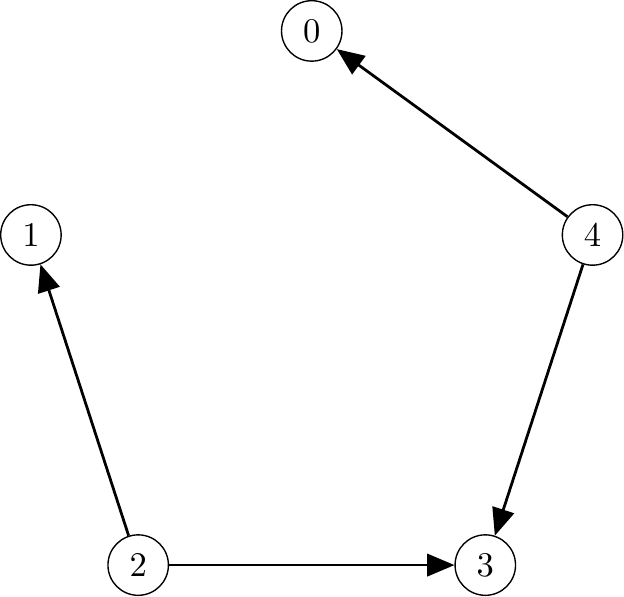} &
        \includegraphics[width=0.09\textwidth]{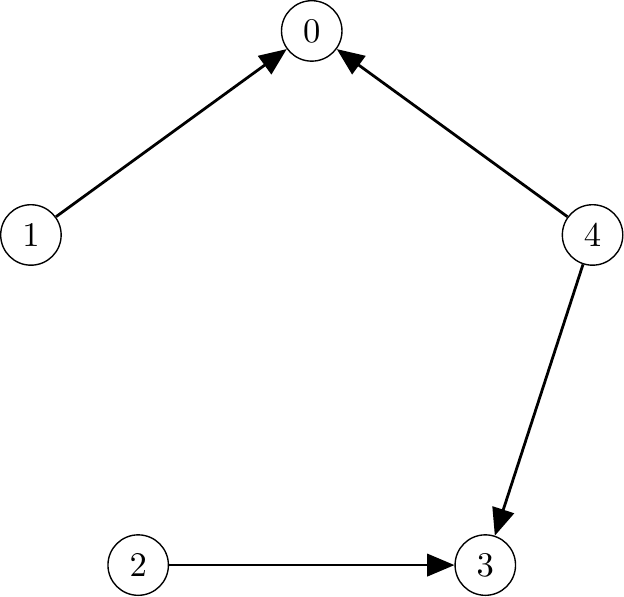} &
        \includegraphics[width=0.09\textwidth]{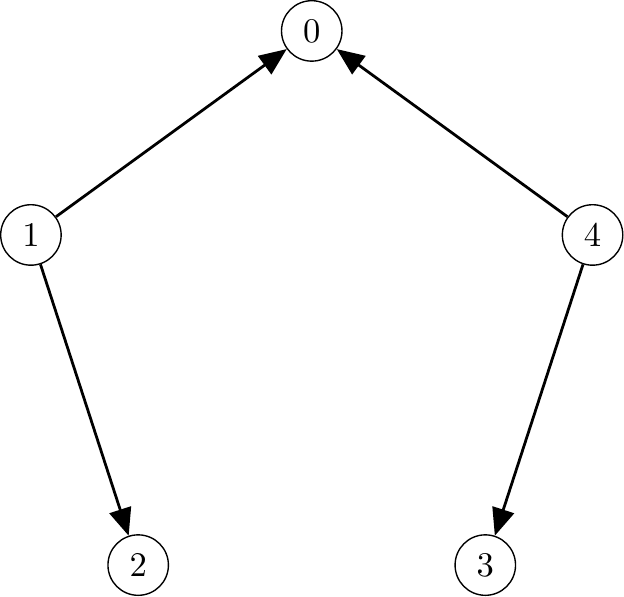} &
        \includegraphics[width=0.09\textwidth]{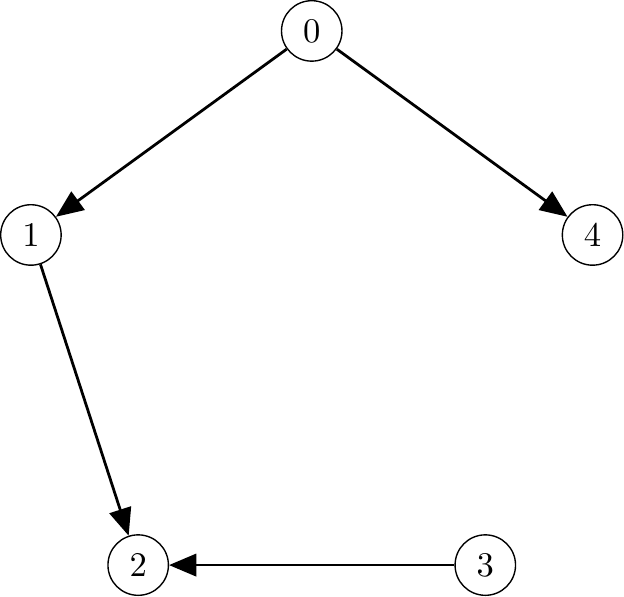} &
        \includegraphics[width=0.09\textwidth]{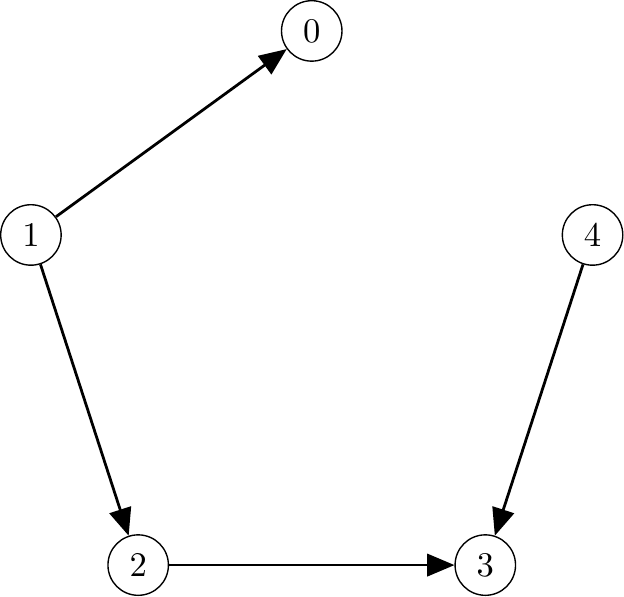} &
        
        \\[0.3ex]
        \includegraphics[width=0.09\textwidth]{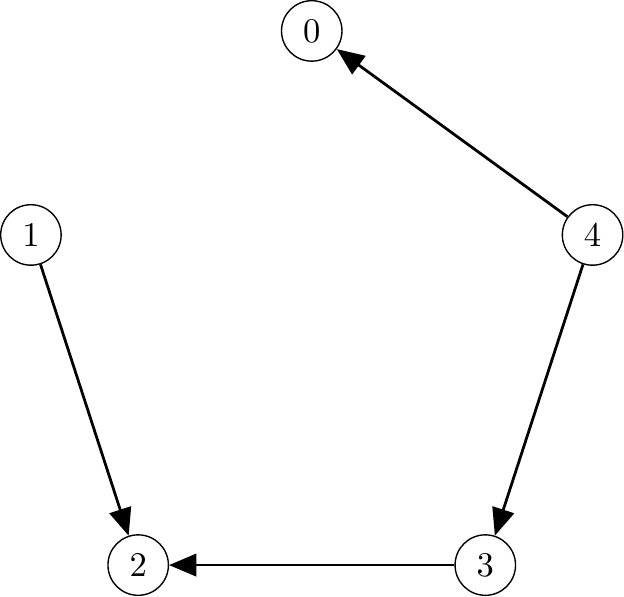} &
        \includegraphics[width=0.09\textwidth]{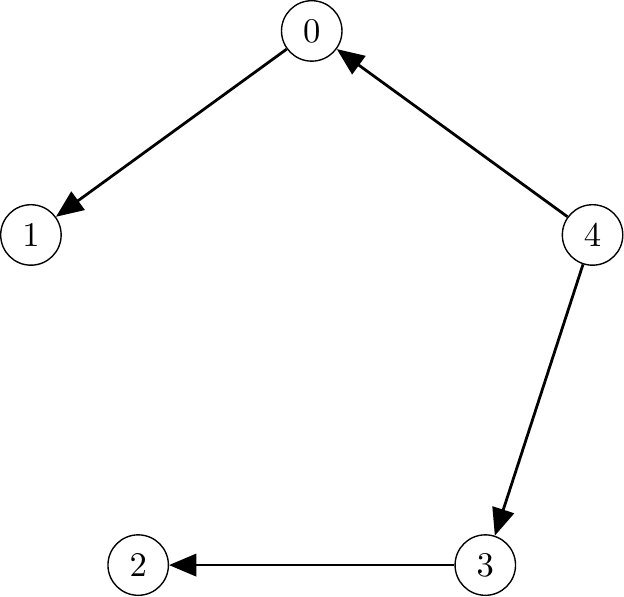} &
        \includegraphics[width=0.09\textwidth]{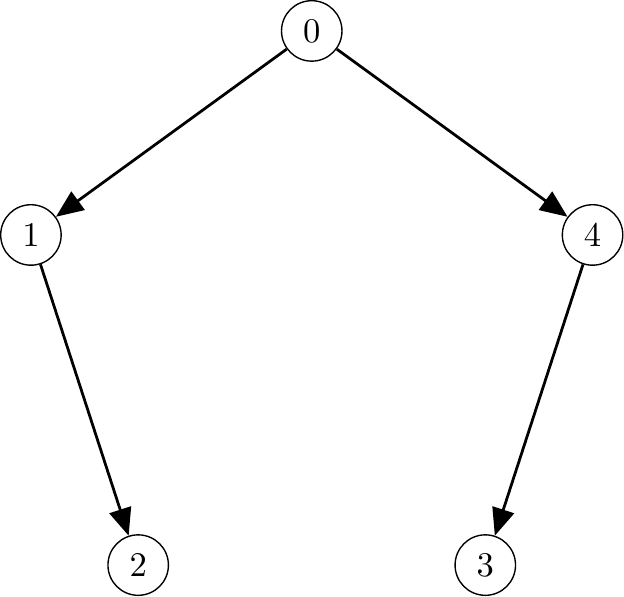} &
        \includegraphics[width=0.09\textwidth]{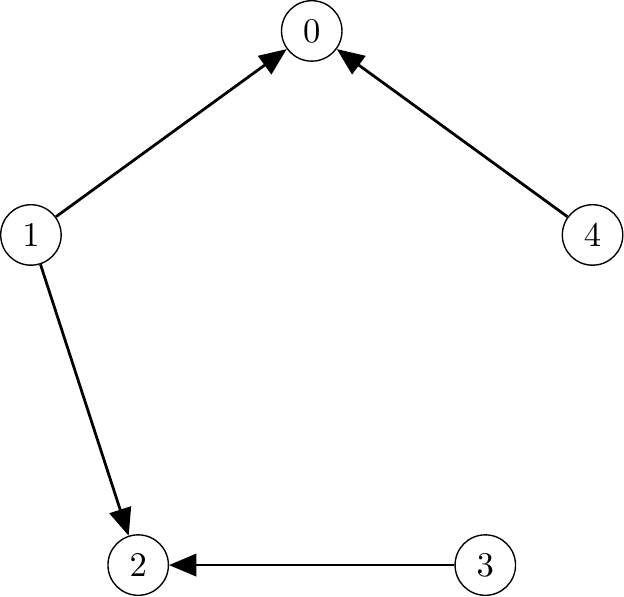} &
        \includegraphics[width=0.09\textwidth]{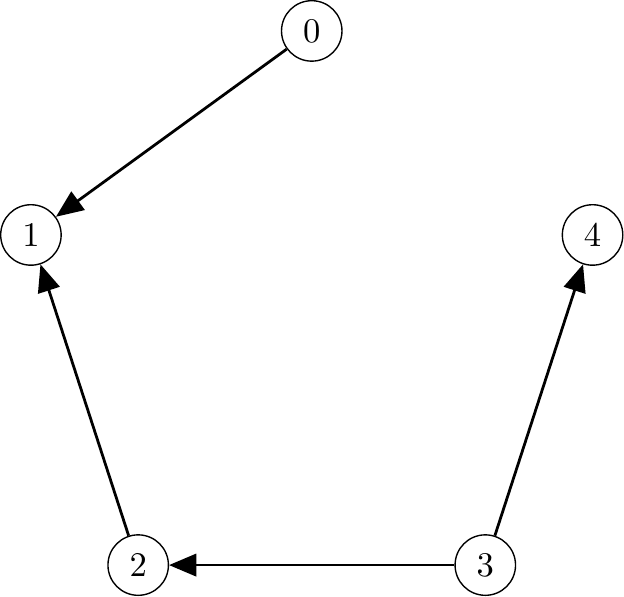} &
        
        \\[0.3ex]
        \includegraphics[width=0.09\textwidth]{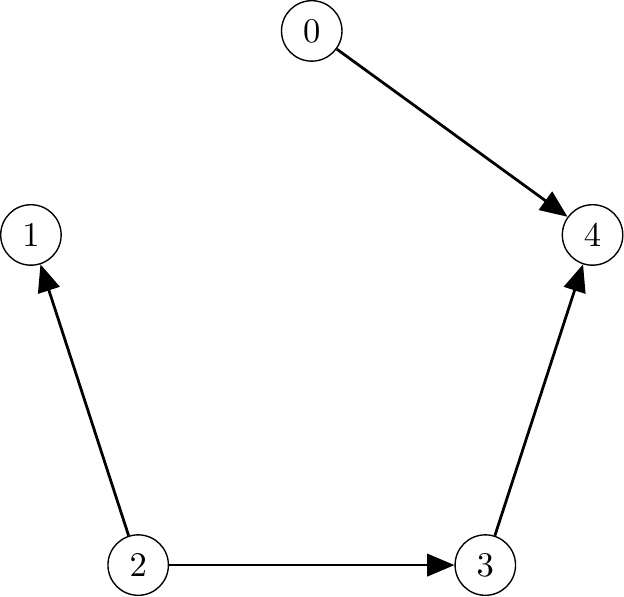} &
        \includegraphics[width=0.09\textwidth]{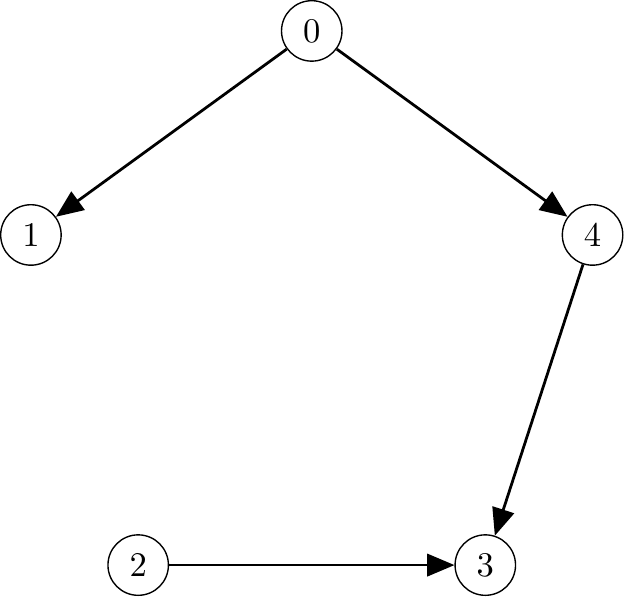} &
        \includegraphics[width=0.09\textwidth]{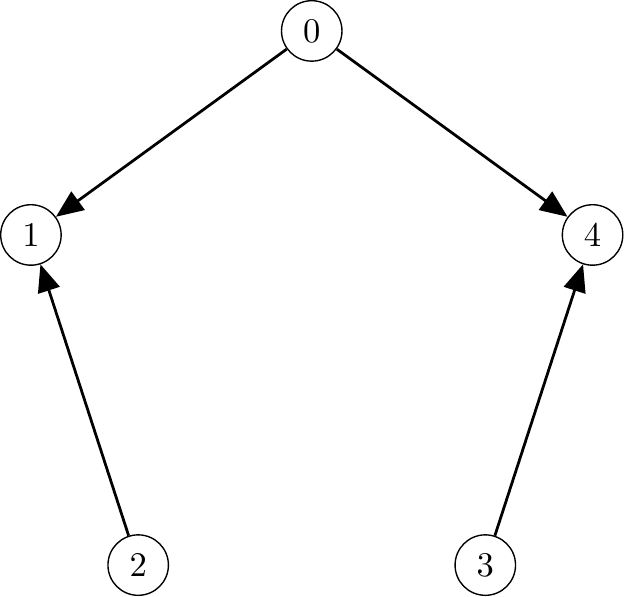} &
        \includegraphics[width=0.09\textwidth]{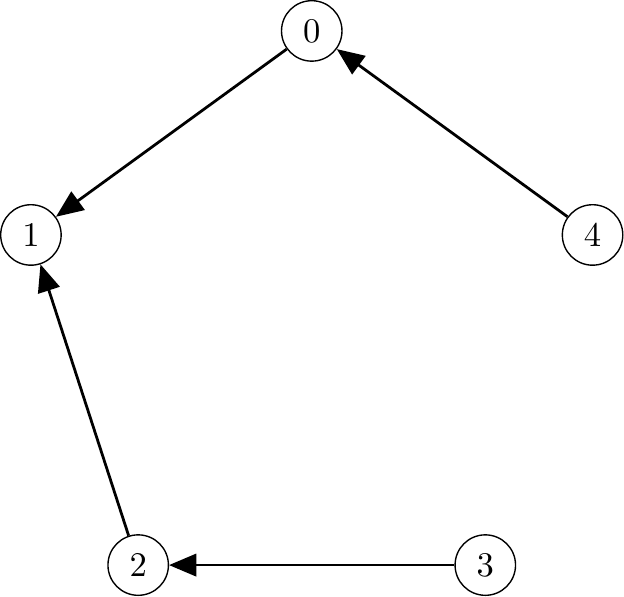} &
        \includegraphics[width=0.09\textwidth]{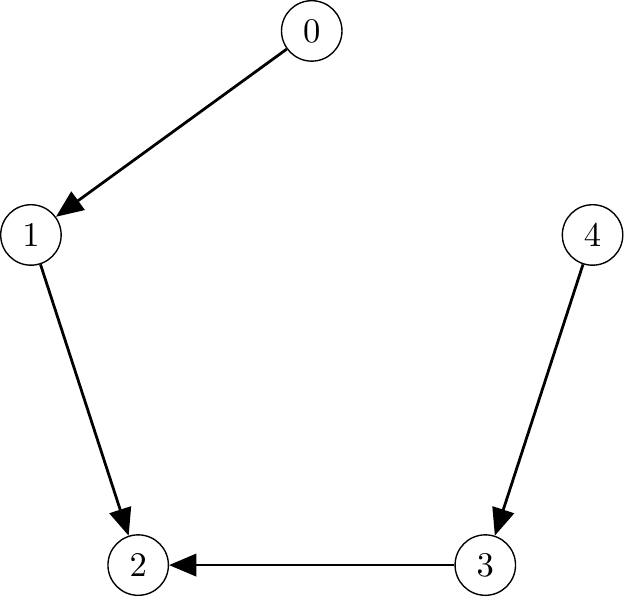} &
        \\[0.3ex]
        
        \includegraphics[width=0.09\textwidth]{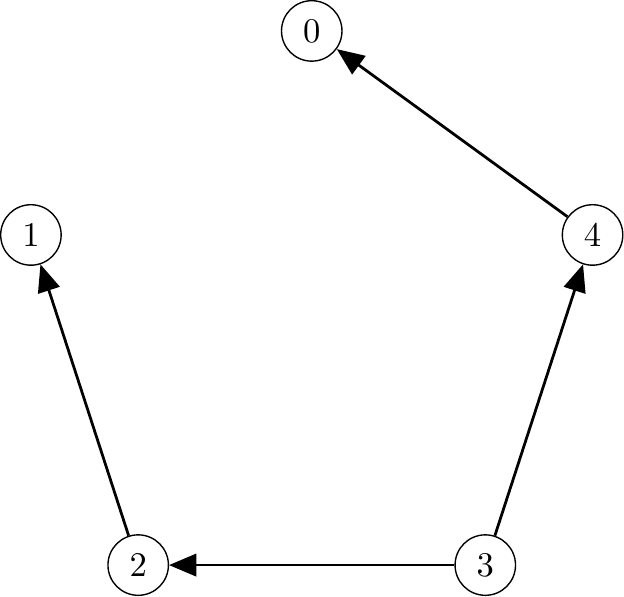} &
        \includegraphics[width=0.09\textwidth]{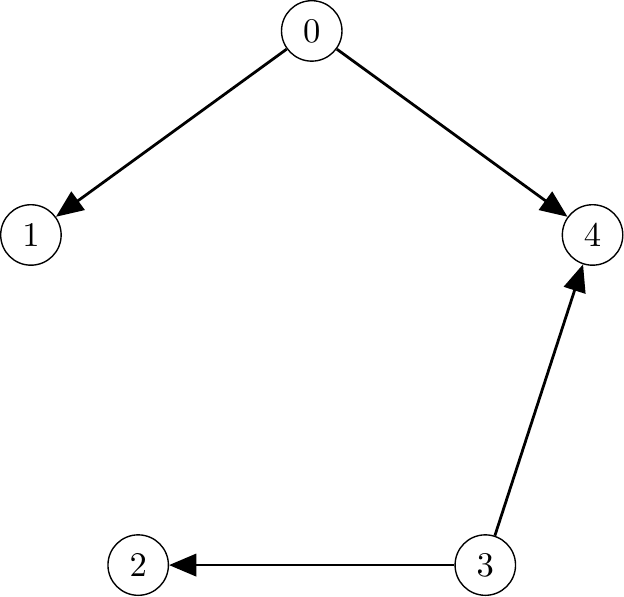} &
        \includegraphics[width=0.09\textwidth]{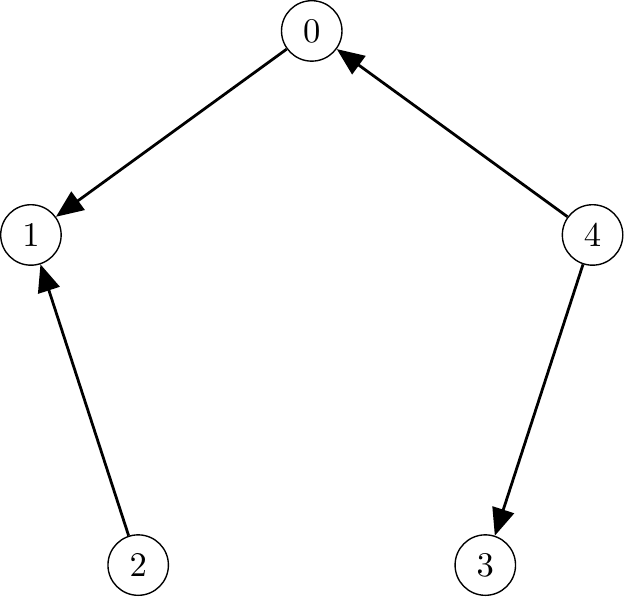} &
        \includegraphics[width=0.09\textwidth]{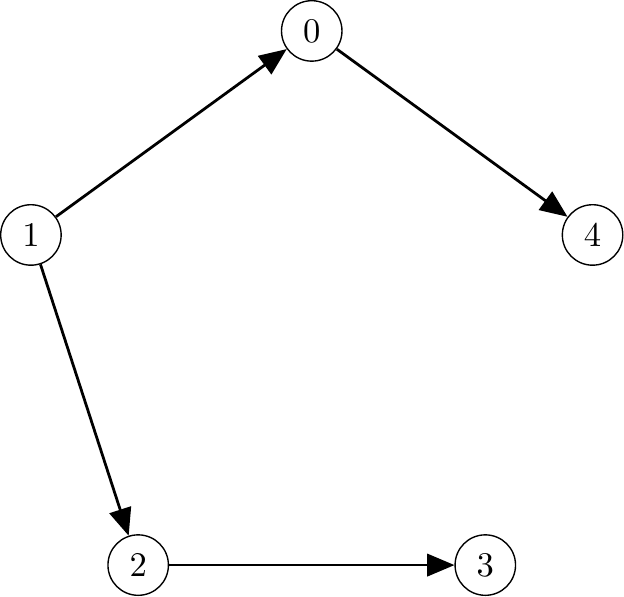} &
        \includegraphics[width=0.09\textwidth]{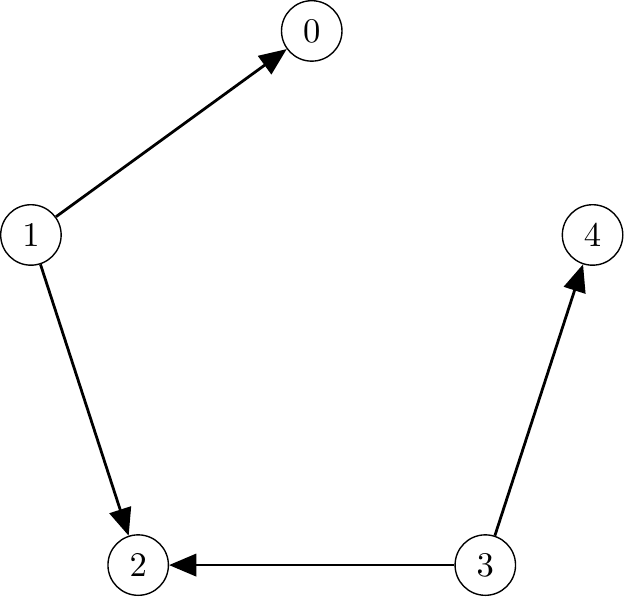} &
    \end{tabular}
    \caption{
        Facet subnetworks (all primitive) of a cycle of 5 oscillators.
    }
    \label{fig:cycle5-dac}
\end{figure}

\begin{remark}
        This computationally obtained root count for the Kuramoto equations
        induced by cycle networks has also been rigorously analyzed using techniques
        from convex geometry in a related work,\cite{ChenDavisMehta2018Counting}
        where an explicit formula for the root count is established.
    Moreover,
    a follow-up paper~\cite{ChenDavis2018Toric} by Robert Davis and the author
    demonstrated that for cycle networks with an \emph{odd} number of oscillators, 
    the facet decomposition scheme always produces the most refined 
    decomposition into primitive subnetworks,
    and there is an one-to-one correspondence between these primitive subnetworks
    and the complex synchronization configurations of the original network 
    under the genericity conditions on the coefficients.
    For cycle networks with an \emph{even} number of oscillators, however,
    the facet decomposition scheme alone is not sufficient to produce
    only primitive subnetworks.
    A refined decomposition scheme~\cite{ChenDavis2018Toric} is proposed
    that can further decompose cycle networks into only primitive subnetworks.
\end{remark}

\subsection{A chordal network}

We now consider a chordal network of four oscillators,
which consists of a cycle of four nodes together with a ``chord'' edge,
as displayed in~\Cref{fig:chord4}.
Alternatively, such a graph can also be considered as two cycle graphs 
sharing common edges.
\Cref{fig:chord4-dac} shows all 12 of the facet subnetworks.
Eight of them are primitive.
The remaining four are non-primitive but less complicated than the original network.
    In total, this chordal network of four oscillators has 
    16 complex synchronization configurations:
    The eight primitive facet subnetworks each contribute one configuration,
    and the four non-primitive subnetworks each contribute two.
    
    As in the previous case,
    the set of facet subnetworks exhibits symmetric structures.
    In additional to the transpose symmetry stated in \Cref{thm:facet-network},
    the set inherits from the original network the reflection symmetries given by
    $\{ 0 \mapsto 2 \}$, $\{ 1 \mapsto 3\}$, etc.

\begin{figure}[ht]
    \begin{tabular}{ccccc}
        \includegraphics[width=0.11\textwidth]{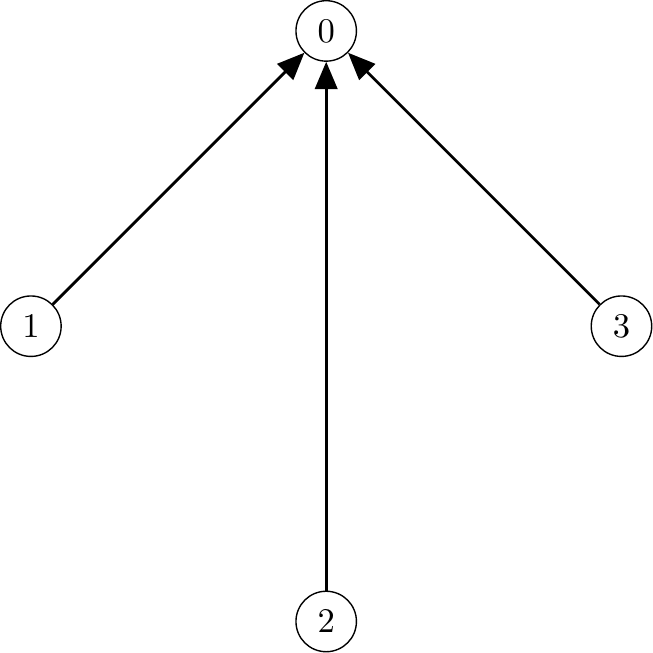} &
        \includegraphics[width=0.11\textwidth]{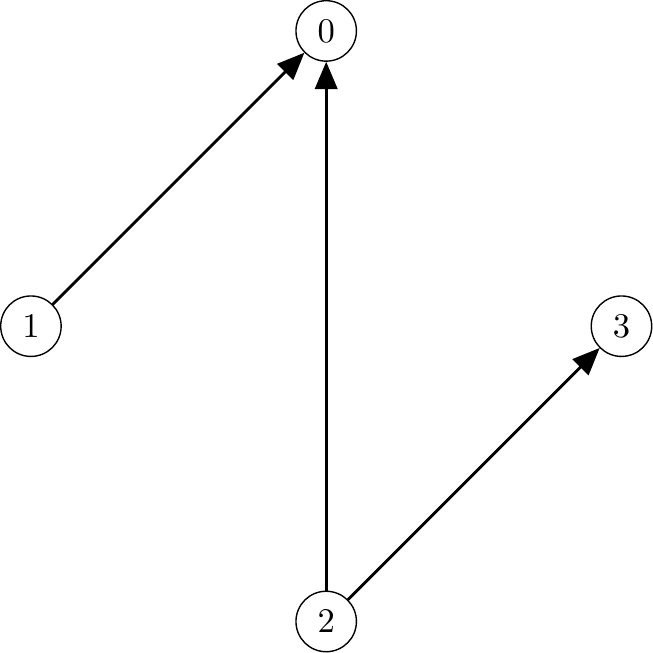} &
        \includegraphics[width=0.11\textwidth]{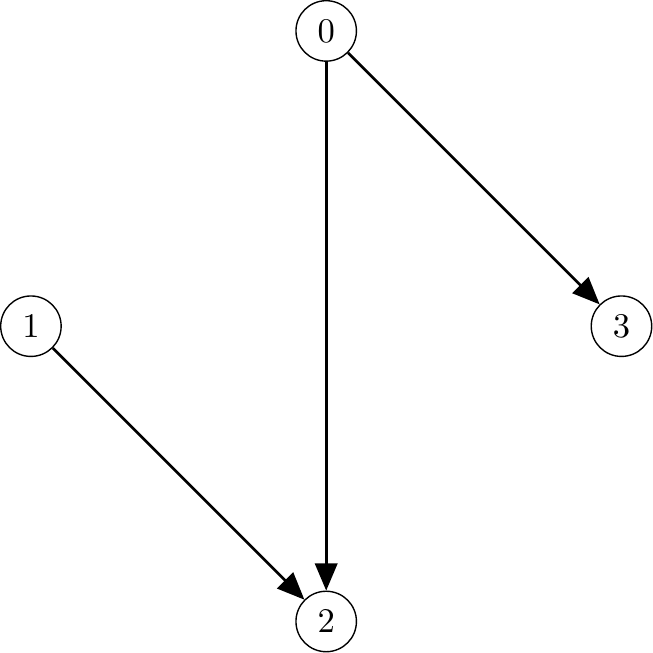} &
        \includegraphics[width=0.11\textwidth]{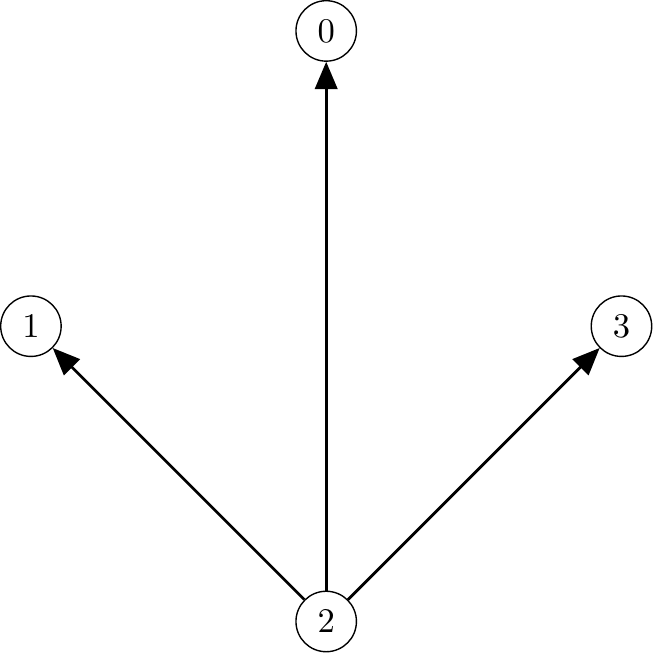} &
        \\[1ex]
        \includegraphics[width=0.11\textwidth]{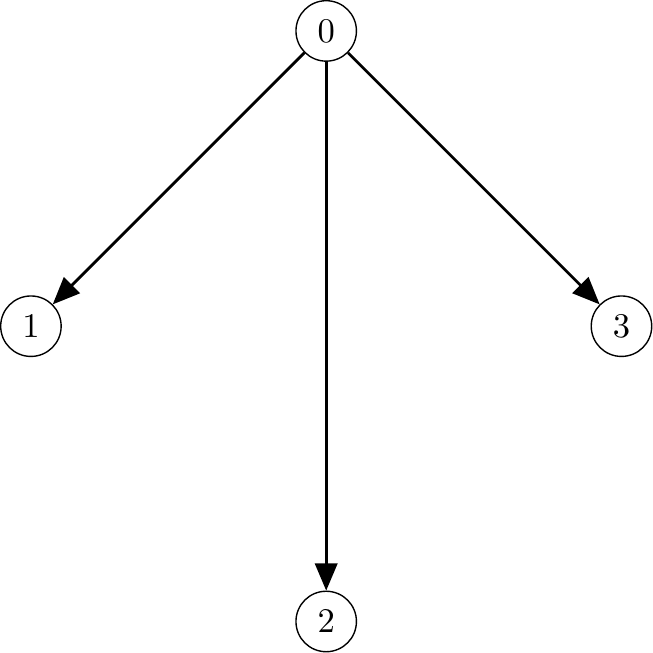} &
        \includegraphics[width=0.11\textwidth]{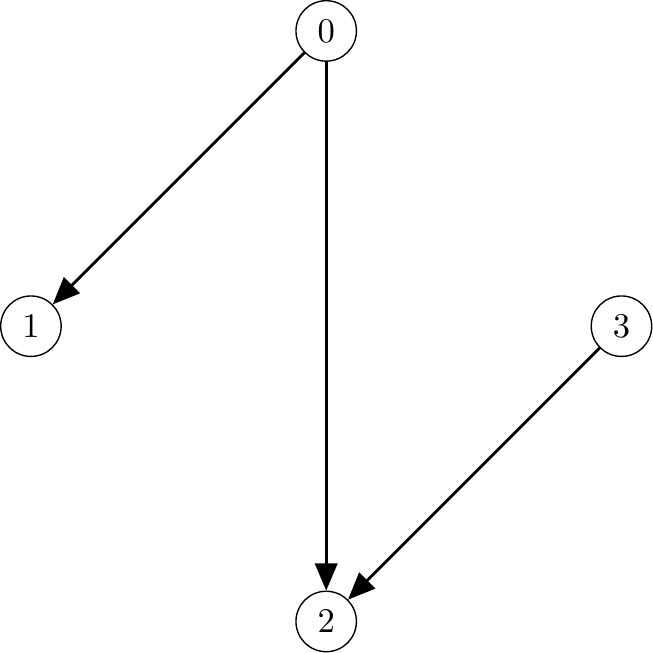} &
        \includegraphics[width=0.11\textwidth]{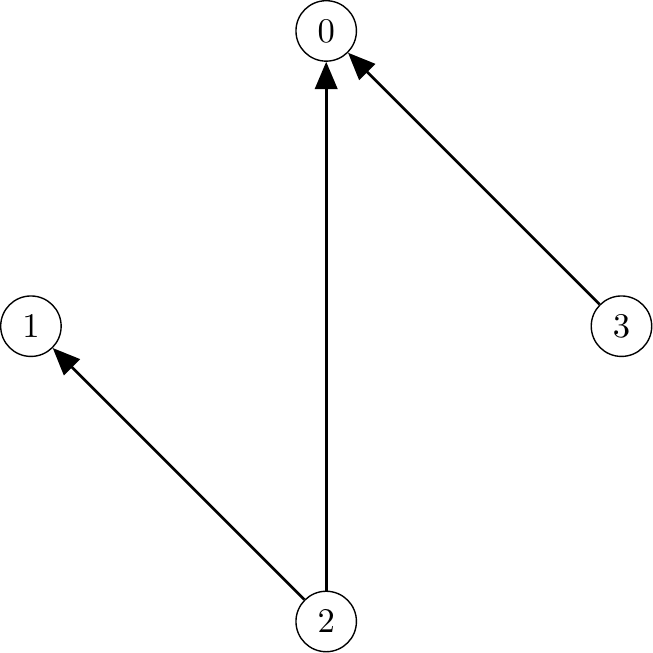} &
        \includegraphics[width=0.11\textwidth]{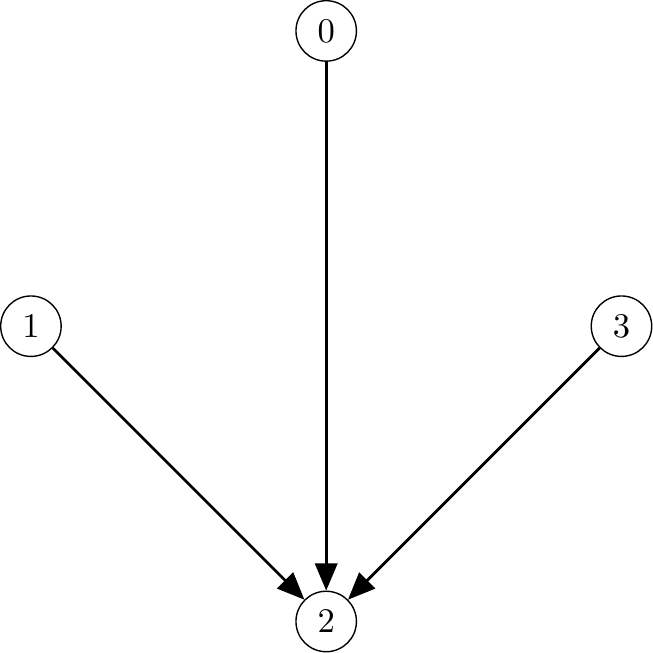} &
        \\[1ex]
        \includegraphics[width=0.11\textwidth]{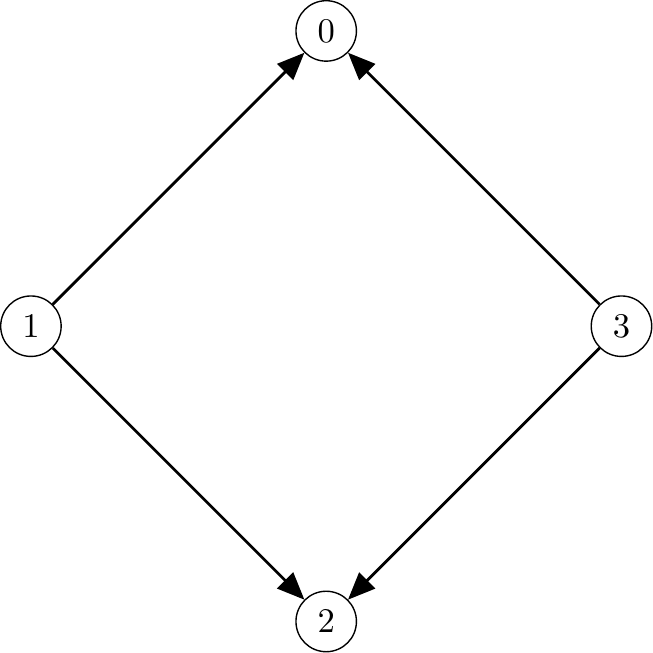} &
        \includegraphics[width=0.11\textwidth]{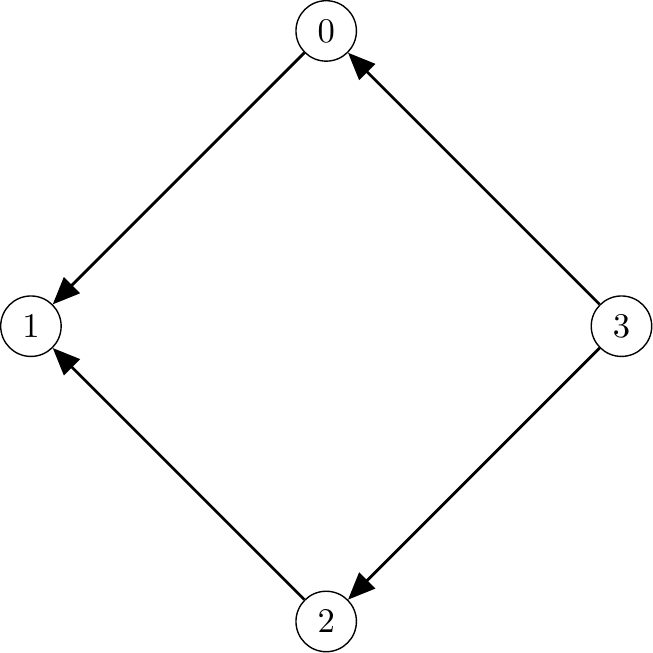} &
        \includegraphics[width=0.11\textwidth]{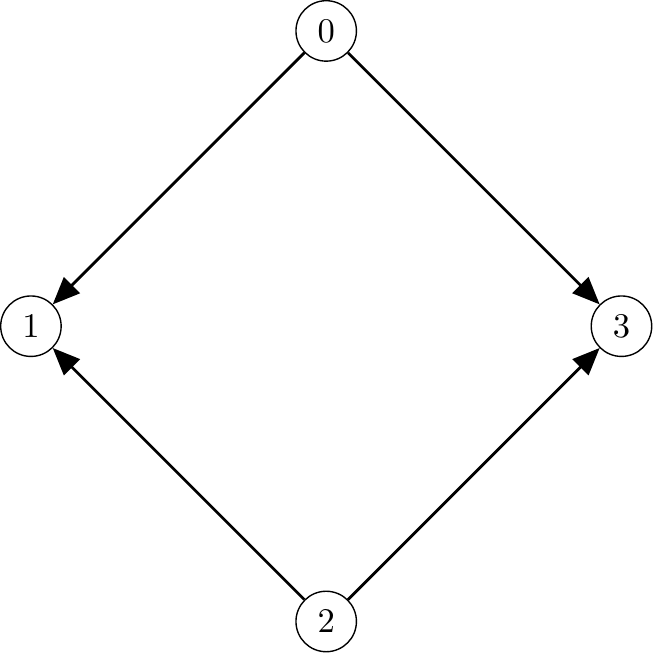} &
        \includegraphics[width=0.11\textwidth]{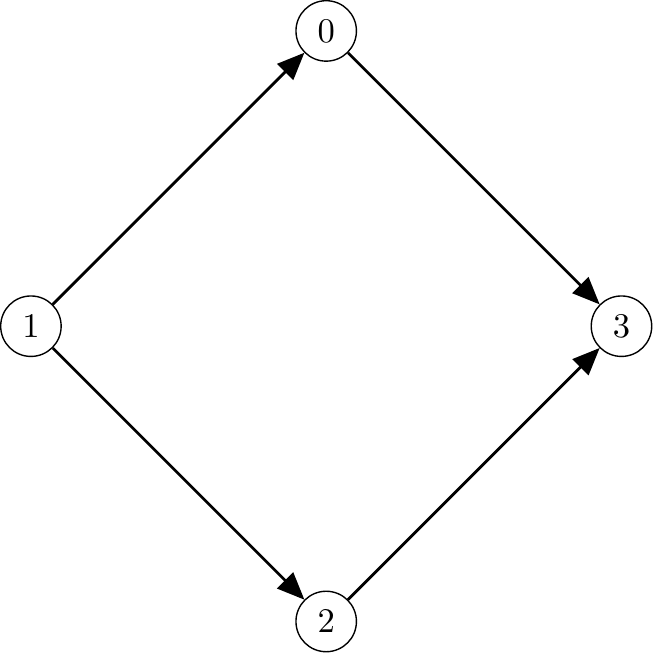} &
    \end{tabular}
    \caption{Facet subnetworks of a chordal graph of 4 nodes}
    \label{fig:chord4-dac}
\end{figure}

\subsection{A wheel network}

\Cref{fig:wheel10} displays a ``wheel network'' of 10 oscillators 
which is simply a cycle network consisting of 9 oscillators
together with an additional central node that is 
directly connected to all other nodes.
This network has a total of 1598 facet subnetworks.
116 of them are primitive facet subnetworks
(see for example \Cref{fig:wheel-10-prim} and \Cref{fig:wheel-10-prim-mirror}).
The number of edges in the subnetworks range from 9 (primitive) to 13.
\Cref{fig:wheel-10-dac} includes some examples from each of these classes,
and \Cref{tab:wheel-10-sizes} summarizes the distribution of these subnetworks by size.

Transpose symmetry can be seen from \cref{fig:wheel-10-prim} and \cref{fig:wheel-10-prim-mirror}
--- one could be obtained from another by reversing the direction of all the edges.
Similar to the case of cycle networks,
the facet subnetworks exhibit cyclic symmetries inherited from the original network.

By computing the volume of each facet of $\adjp_G$,
the contribution of each facet subnetwork to the total root count can be obtained,
and they are summarized in \cref{tab:wheel-10-sizes}.
For instance, each of the 116 primitive subnetworks contribute one complex root
while each of the 414 non-primitive subnetworks having 10 edges contribute two complex roots.
In total, the maximum number of complex synchronization configurations
on this network is 8480,
and this bound is exact for generic choices of coefficients.

\begin{figure}[ht]
    \centering
    \begin{tikzpicture}[scale=0.5]
    \node[draw,circle] (0) at ( 0.0,0.0) {$0$};
    \node[draw,circle] (1) at (-1.2,2.3) {$1$};
    \node[draw,circle] (9) at ( 1.2,2.3) {$9$};
    \node[draw,circle] (2) at (-2.85316954889,0.927050983125) {$2$};
    \node[draw,circle] (3) at (-2.85316954889,-0.927050983125) {$3$};
    \node[draw,circle] (4) at (-1.76335575688,-2.42705098312) {$4$};
    \node[draw,circle] (5) at (-5.51091059616e-16,-3.0) {$5$};
    \node[draw,circle] (6) at (1.76335575688,-2.42705098312) {$6$};
    \node[draw,circle] (7) at (2.85316954889,-0.927050983125) {$7$};
    \node[draw,circle] (8) at (2.85316954889,0.927050983125) {$8$};
    \path[draw,thick] (0)  --  (1) ;
    \path[draw,thick] (0)  --  (2) ;
    \path[draw,thick] (0)  --  (3) ;
    \path[draw,thick] (0)  --  (4) ;
    \path[draw,thick] (0)  --  (5) ;
    \path[draw,thick] (0)  --  (6) ;
    \path[draw,thick] (0)  --  (7) ;
    \path[draw,thick] (0)  --  (8) ;
    \path[draw,thick] (0)  --  (9) ;
    \path[draw,thick] (1)  --  (2) ;
    \path[draw,thick] (2)  --  (3) ;
    \path[draw,thick] (3)  --  (4) ;
    \path[draw,thick] (4)  --  (5) ;
    \path[draw,thick] (5)  --  (6) ;
    \path[draw,thick] (6)  --  (7) ;
    \path[draw,thick] (7)  --  (8) ;
    \path[draw,thick] (8)  --  (9) ;
    \path[draw,thick] (9)  --  (1) ;
    \end{tikzpicture}
    \caption{A wheel network of 10 oscillators}
    \label{fig:wheel10}
\end{figure}
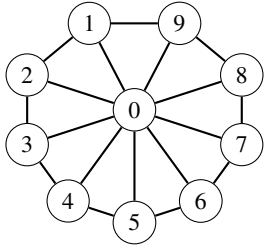

\begin{table}[ht]
    \centering
    \begin{tabular}{lrrrrr}
        \toprule
        Facet subnetwork size           &   9 &  10 &  11 &  12 &  13 \\ \midrule
        N.o. facet subnetworks          & 116 & 414 & 540 & 384 & 144 \\
        Root count per subnetwork &   1 &   2 &   4 &   8 &  16 \\ \bottomrule
    \end{tabular}
    \caption{
        Number of facet subnetworks and 
        the root counts contributed by each 
        classified by the size (n.o. edges) of subnetworks
    }
    \label{tab:wheel-10-sizes}
\end{table}

In spite of having a large number of potential complex synchronization configurations,
a modern homotopy continuation solver can still find all configurations quickly.
For instance, a modified version\footnote{%
    \textsf{Hom4PS-3} implements the general polyhedral homotopy method.\cite{HuberSturmfels1995Polyhedral}
    As noted in \Cref{rmk:polyhedral}, the proposed homotopy method
    can be considered as a highly specialized version of the unmixed form of
    the polyhedral homotopy method.
    The preliminary implementation used here combines \textsf{Hom4PS-3}
    with a pre-processor that analyzes the adjacency polytope,
    generates suitable facet systems as starting systems,
    and bootstraps the homotopy method.
    This pre-processor relies on facet information computed by
    \textsf{Polymake}.\cite{polymake}
}of \textsf{Hom4PS-3} 
can solve the corresponding synchronization system 
in seconds on a moderate workstation.
More importantly, the homotopy method scales almost linearly as more processor cores are used.
\Cref{tab:hom4ps-time} displays the computation time consumed on a workstation equipped with 16 GB of memory and
\textsf{Intel Xeon E5-2690} processor running at 2.9 GHz and up to 8 cores.

\begin{figure}[ht]
    \centering
    \subfloat[A primitive subnetwork]{
        \label{fig:wheel-10-prim}
        \centering
        \includegraphics[width=0.14\textwidth]{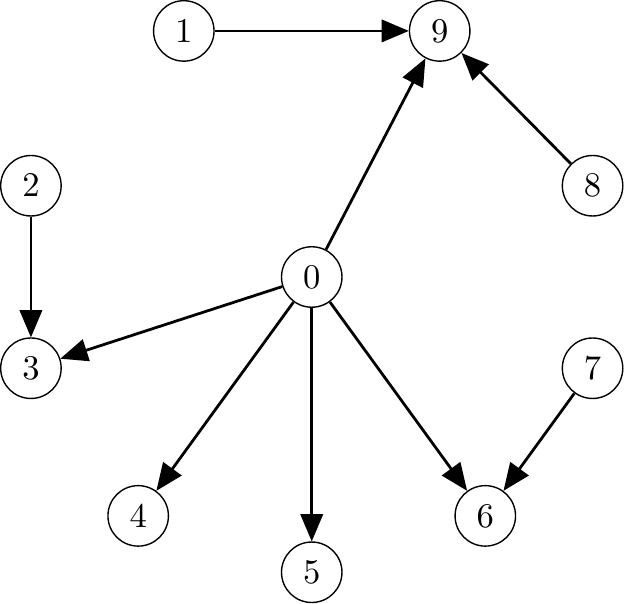}
    }\hspace{1ex}%
    \subfloat[Another primitive subnetwork]{
        \label{fig:wheel-10-prim-mirror}
        \centering
        \includegraphics[width=0.14\textwidth]{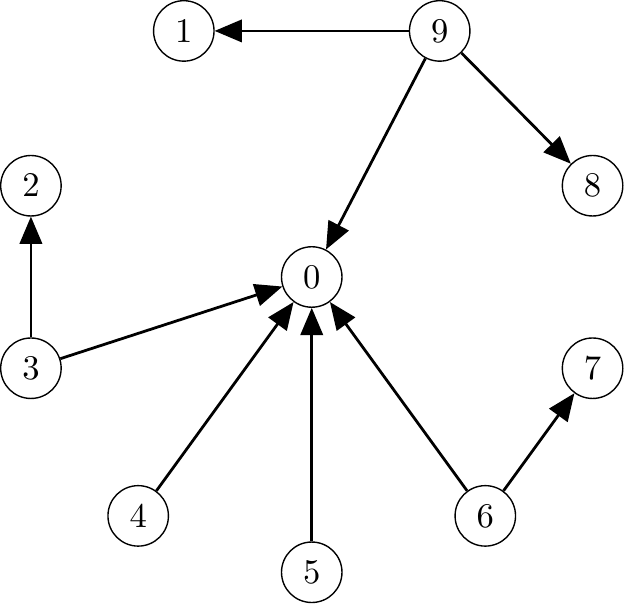}
    }\hspace{1ex}%
    \subfloat[A nonprimitive subnetwork]{
        \label{fig:wheel-10-10}
        \centering
        \includegraphics[width=0.14\textwidth]{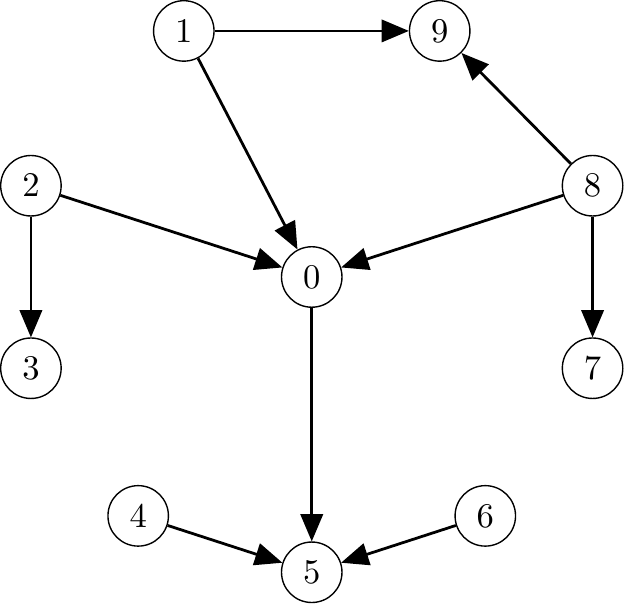}
    }
    
    \subfloat[A nonprimitive subnetwork]{
        \label{fig:wheel-10-11}
        \centering
        \includegraphics[width=0.14\textwidth]{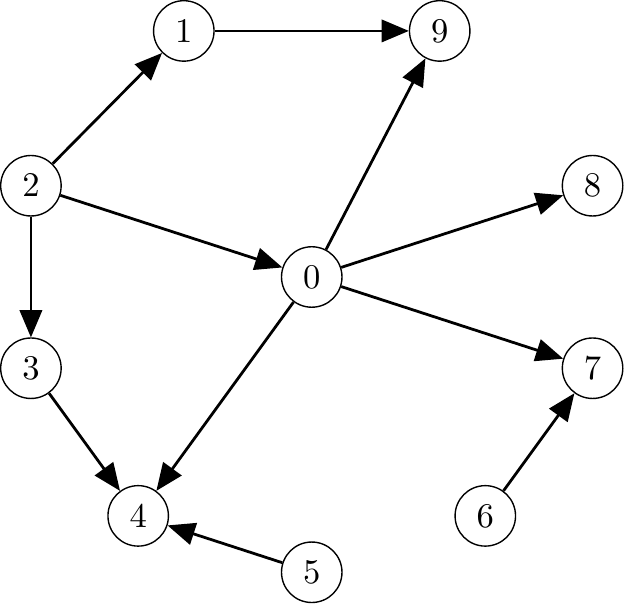}
    }\hspace{1ex}%
    \subfloat[A nonprimitive subnetwork]{
        \label{fig:wheel-10-12}
        \centering
        \includegraphics[width=0.14\textwidth]{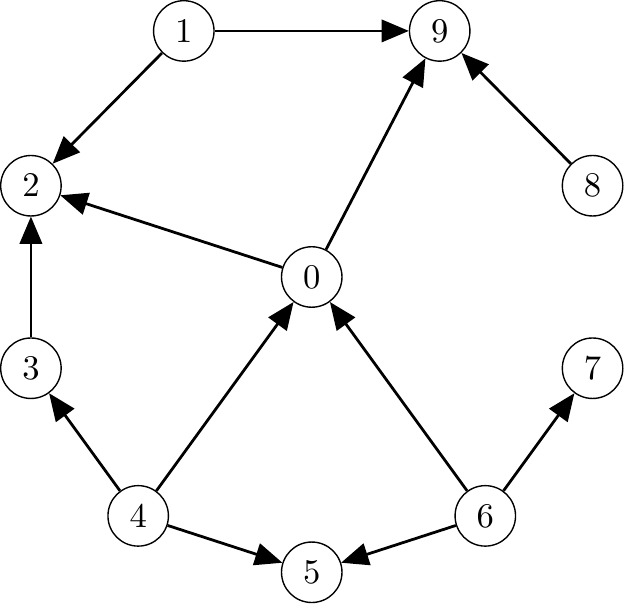}
    }\hspace{1ex}%
    \subfloat[A nonprimitive subnetwork]{
        \label{fig:wheel-10-13}
        \centering
        \includegraphics[width=0.14\textwidth]{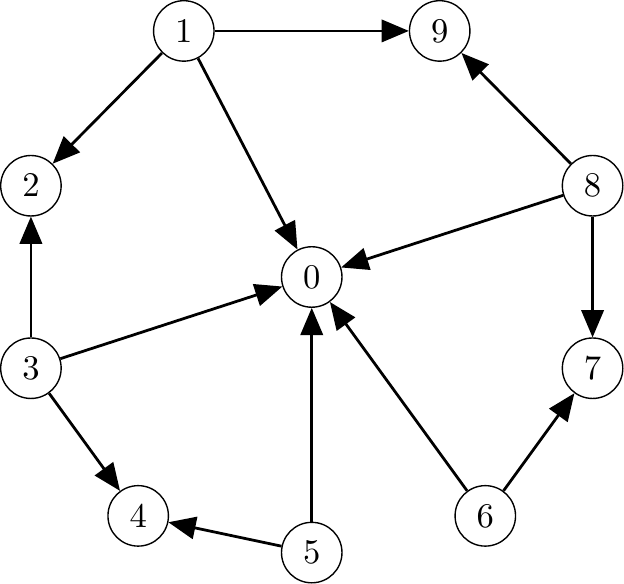}
    }
    \caption{Some subnetworks of a wheel network of 10 oscillators}
    \label{fig:wheel-10-dac}
\end{figure}

\begin{table}[h]
    \centering
    \begin{tabular}{lrrrrrr}
        Threads/cores used & 1     & 2    &  3    & 4    & 6    & 8    \\ \toprule
        Wall clock time    & 9.1s  & 4.6s &  3.3s & 2.6s & 1.8s & 1.4s \\ \bottomrule
    \end{tabular}
    \caption{
        Time consumed by a preliminary implementation based on a modified version of \textsf{Hom4PS-3}
        for solving the synchronization system derived from the Kuramoto network
        displayed in \Cref{fig:wheel10}
        using different numbers of processor cores.
    }
    \label{tab:hom4ps-time}
\end{table}

\section{Conclusion and future directions}\label{sec:conclusions}

This paper focuses on the study of frequency synchronization configurations
in Kuramoto models for networks of coupled oscillators.
It lays the foundation for a divide-and-conquer approach to analyzing
large and complex Kuramoto networks from the view point of algebraic geometry.
The main contribution of this paper is a general decomposition scheme for Kuramoto networks
that can reduce a complicated Kuramoto network into a collection of simpler
directed acyclic subnetworks which are generalized Kuramoto networks
that allow one-way interactions among oscillators (c.f. Ref.\onlinecite{DelabaysJacquodDorfler2018}).
The challenge of finding all possible frequency synchronization configurations
is thus reduced to the problem of fully understanding 
these much simpler subnetworks.

Starting from a complex algebraic re-formulation of 
the underlying transcendental equations,
the proposed framework uses the geometric information
extracted from the ``adjacency polytope'' of the network,
which is a polytope that encodes the network topology.
This polytope is also the convex hull of the union of the Newton polytopes
derived from the algebraic synchronization equations.
The subnetworks are then in one-to-one correspondence with the
facets (maximal faces) of the adjacency polytope.
Since the adjacency polytopes are highly symmetric,
the facets can be enumerated efficiently.
Associated with each of these facet subnetworks
is a much simpler, generalized synchronization equation,
a ``facet subsystem,''
which only involves a fraction of the terms from the original
algebraic synchronization equations.
These subsystems are expected to be easier to solve than the original system.
The facet subsystems associated with primitive subnetworks
can be solved in linear time with no additional memory required.

From a computational point of view,
the proposed decomposition framework gives rise to a homotopy continuation method.
It induces a continuous deformation of the algebraic synchronization system
that can degenerate into a collection of simpler facet subsystems.
    Homotopy continuation methods have been proved to be
    highly scalable as each solution can be computed independently.
    The can therefore be parallelized easily on modern high performance computing hardware.

The resulting homotopy method can be viewed as a highly specialized polyhedral homotopy method
with fixed lifting functions (valuation functions).
In the case that all subnetworks are primitive,
this homotopy construction circumvents two of the main 
computational bottlenecks of the polyhedral homotopy method:
\begin{enumerate}
    \item 
        Since the facet subnetworks corresponds to facets of a well-known family of polytopes,
        they can be listed more easily, thereby allowing one to skip the costly step of ``mixed cells computation''.
    \item
        The starting system --- facet subsystems ---
        can be solved in \emph{linear time} and does not require the costly step of
        computing Smith Normal Forms or Hermite Normal Forms of exponent matrices.
\end{enumerate}

The examples in the \cref{sec:example} demonstrate that
for certain types of networks,
some facet subnetworks may not be primitive.
The next step is to develop algorithms 
in order to refine our facet-based decomposition efficiently 
so that \emph{all} resulting subnetworks are primitive.
Recent fellow-up works have presented promising developments in this direction.
For instance, in a recent paper by Robert Davis and the author,
the directed acyclic decomposition scheme proposed here is significantly refined
for cycle graphs so that all resulting subnetworks are primitive.
This refinement is equivalent to a triangulation process for the facets 
of the adjacency polytope,
but with the derivation of the explicit formula,
the complexity of the refinement step for each facet is only
linear in the number of oscillators.
The refinement scheme for other types of networks remains an open problem.

    In this work, we have expanded our domain to include complex synchronization configurations
    in order to take advantage of the powerful tools of complex algebraic geometry.
    These complex configurations can be of great value
    even when only real configurations are needed.
    First, the complex configurations necessarily include all real configurations.
    Through an examination of the imaginary parts of the phase angles,
    the extraneous (non-real) solutions can be filtered out easily.
    Similarly, by examining the eigenvalues of the Jacobian matrices
    at real solutions unstable solutions can also be filtered out 
    if only stable solutions are desired.
    More importantly, the collection of complex configurations forms a reservoir 
    of real configurations as one vary the parameters
    (natural frequencies and coupling strength).
    As the parameters move in the space of all possible parameters,
    for almost all parameters, the total number of complex configurations 
    (counting multiplicity) remains the same, 
    while the real configurations can increase or decrease.
    Non-real configurations may collide and form real configurations,
    and real configurations may degenerate into non-real ones.
    This phenomenon is a central question in numerical algebraic geometry
    and algebraic geometry in general.
    Numerical algorithms using all complex solutions as seeds for the exploration of the landscape of real solutions
    have been developed in the context of mechanical engineering.\cite{GriffinHauenstein2015Real}
    Similarly numerical algorithms targeting real solutions of certain stability types
    have been studied in the context of chemical clusters.\cite{ChenMehta2015Index}
    Combining these techniques and the framework developed in this work
    can shed new light on the 
    possible real or even stable synchronization configurations of Kuramoto models.

\section*{Acknowledgments}

This work was funded in part by the AMS-Simons Travel Grant and Auburn University at Montgomery Grant-In-Aid program
and NSF under Grant Nos. 1115587 and 1923099.

The author would like to thank 
Dhagash Mehta for introducing the interesting topic of the Kuramoto model;
Daniel Molzahn for the helpful discussions on the closely related power-flow equations;
and Wuwei Lin for asking the question of whether or not a Kuramoto network
can be decomposed into smaller subnetwork in a reasonable way 
--- a question that started this research.
The author also greatly benefited from discussions with Robert Davis.
Indeed, discussions with Robert Davis produced a significant refinement 
of the framework developed in this paper for cycle graphs,
and this extension is discussed in a separate follow-up paper~\cite{ChenDavis2018Toric}.

\vspace{0.2cm}
\textit{
    This article may be downloaded for personal use only. 
    Any other use requires prior permission of the author and AIP Publishing. 
    This article appeared in  
    Chaos: An Interdisciplinary Journal of Nonlinear Science 
    (Vol.29, Issue 9)
    and may be found at 
    \url{https://doi.org/10.1063/1.5097826}.
}

\bibliography{all}
\end{document}